\def\@cite#1#2{{\m@th\upshape\bfseries%
[{#1\if@tempswa{\m@th\upshape\mdseries, #2}\fi}]}}
\theoremstyle{plain}
\newtheorem{theorem}{Theorem}[section]
\newtheorem{corollary}[theorem]{Corollary}
\newtheorem{proposition}[theorem]{Proposition}
\newtheorem{lemma}[theorem]{Lemma}
\theoremstyle{definition}
\newtheorem{definition}[theorem]{Definition}
\newtheorem{example}[theorem]{Example}
\newtheorem{remark}[theorem]{Remark}
\theoremstyle{remark}
\def\bbC{\mathbb C}
\def\bbD{\mathbb D}
\def\bbN{\mathbb N}
\def\bbR{\mathbb R}
\def\bbT{\mathbb T}
\def\bbZ{\mathbb Z}
     \newcommand{\sA}{\mathcal A}
     \newcommand{\sB}{\mathcal B}
     \newcommand{\sC}{\mathcal C}
     \newcommand{\sH}{\mathcal H}
\newcommand{\al}{\alpha}
\newcommand{\be}{\beta}
\newcommand{\ep}{\varepsilon}
\newcommand{\vpi}{\varphi}
\newcommand{\si}{\sigma}
\newcommand{\ga}{\gamma}
\newcommand{\Aut}{\operatorname{Aut}}
\begin{document}

\title[Uniform algebras on $\bbT^2$]{Isomorphism of uniform algebras on the 2-torus}

\author[J.R. Peters]{Justin R. Peters}
\address{Department of Mathematics\\Iowa State University\\ Ames\\ Iowa\\ IA 50011\\ USA}
\email{peters@iastate.edu}

\author[P. Sanyatit]{Preechaya Sanyatit}
\address{Department of Mathematics\\Silpakorn University\\ Nakhon Pathom\\ 73000\\ Thailand}
\email{sanyatit$\textunderscore$p@silpakorn.edu}

\subjclass[2010]{Primary 46J10; Secondary 11Z05, 42B35}

\keywords{uniform algebra, Gelfand space, Pell's equation}

\begin{abstract} For $\al$ a positive irrational, let $\sA_{\al}$ be the subalgebra of continuous functions on the two-torus whose Fourier transform vanishes at $(m, n)$ if $m + \al n < 0.$ These algebras
were studied by Wermer and others, who proved properties such as maximality and characterized the Gelfand space. One of the major themes of current work in operator algebras is classification,
but none of the properties which were investigated earlier distinguished  between $\sA_{\al}$ and $\sA_{\be},$ if $\be$ is another positive irrational.  We address this question.  We also determine the
automorphism group of $\sA_{\al}.$
\end{abstract}

\maketitle


\section{Introduction}

Recently there has been remarkable progress in the classification of unital simple C$^*$-algebras:
it is now known that the class of unital
simple separable C*-algebras with finite nuclear dimension
(and satisfying the possibly redundant Universal Coefficient
Theorem, or UCT) is  classified by the
natural K-theoretical invariant (also known
as the Elliott invariant). This is described \cite{EllNiu16}, \cite{EllGoNiu16} and \cite{TiWhWi16}.

For the larger category of operator algebras (which need not be closed under the $^*$ operation), the situation is completely different: the classification results that have been obtained are restricted to narrow classes.
The early work on C$^*$-algebras showed that commutative C$^*$-algebras are classified by their Gelfand spectra.
In the category of commutative operator algebras, there is no classification, nor is such a program
underway.

Of course, unlike commutative C$^*$-algebras, which are semisimple, commutative operator algebras can be radical Banach algebras, so the classification task is formidable,
and indeed the unresolved status of the invariant subspace problem serves as a reminder of how little is understood of even the singly-generated commutative operator algebras.

However, if we focus on the class of commutative operator algebras which are semisimple, and hence for which the Gelfand map is an isometric isomorphism, one might hope that a classification
is within reach: indeed, not only are these algebras commutative, but so are their C$^*$-envelopes.  The algebras in this class are also known as uniform algebras.

In this work we have focused on a class of uniform algebras which are subalgebras of the continuous functions on the
 torus $\bbT^2 $  \ ($\bbT$ the unit circle). Let $\al $ be a positive irrational, and let
$ \sA_{\al} $ be the subalgebra of $C(\bbT^2)$ consisting of functions $f$ such that the Fourier transform
$\hat{f}(m, n) = 0$ if $m + n\al < 0.$  These algebras along with other uniform algebras
were studied in the 1950's and 60's. (See \cite{Gam69}, \cite{Wer61}, \cite{Sto71}, and the references contained therein.) The kinds of questions these authors considered were
harmonic analytic in nature; it is evident that the question of whether two such algebras were isometrically isomorphic was not something they addressed.  However, none of the properties
they studied differentiated between $\sA_{\al}$ and $\sA_{\be}$ if $\al,\ \be$ are two positive irrationals.

Unlike commutative, unital C$^*$-algebras, which are determined by their Gelfand spaces, in the case of uniform algebras there is no analogue of the Banach-Stone theorem.
Indeed, it is easy to show that the Gelfand spaces of all the $\sA_{\al}$ are homeomorphic, yet it is not true that the the $\sA_{\al}$ are all in the same
isometric isomorphism class.
In Theorem~\ref{t:iso} we give the form of an isometric isomorphism of $\sA_{\al} \to \sA_{\be}.$  It follows that the cardinality of the isomorphism class is countable.
Corollary~\ref{c:orderedgroup} shows that there is a group invariant:  let $G_{\al}$ be the dense subgroup of $\bbR$ consisting of $\{m + n\al:\ m, n \in \bbZ \}.$ We show that
$\sA_{\al}$ and $\sA_{\be}$ are isometrically isomorphic if and only if there is a group isomorphism $G_{\al} \to G_{\be}$ which maps $G_{\al}\cap \bbR^+ \to G_{\be}\cap \bbR^+.$

In section 4 we examine the group of isometric automorphisms of $\sA_{\al}.$  If $\al$ is not a quadratic irrational, then $\Aut(\sA_{\al}) \cong \bbT^2.$  However if $\al$ is
a quadratic irrational, then $\Aut(\sA_{\al}) $ is isomorphic to a semidirect product of $\bbT^2$ with $\bbZ$ (Theorem~\ref{t:semidirect}).

In section 5, we give an explicit form for the automorphism group of $\sA_{\al}, $ where $\al$ is a quadratic irrational.  As noted above, $\Aut(\sA_{\al})$ is a semidirect product, where the action
of $\bbZ$ on $\bbT^2$ is given by a matrix in $GL(2, \bbZ).$  The results of this section give the form of the generator in $GL(2, \bbZ)$ of this action in terms of the
quadratic irrational $\al$ and the fundamental solutions of Pell's equations.


\section{Background and Notation}

Let $\bbT^2$ denote the 2-torus $\bbT \times \bbT,$ where $\bbT$ denotes the unit circle. Let $d\mu$ be normalized Lebesgue measure on $\bbT^2.$  If $f: \bbT \to \bbC$ is in $L^2,$
the Fourier transform is a function on $\bbZ^2$ given by

\[ \hat{f}(m, n) = \int_{\bbT^2} f(e^{ i s}, e^{i t}) e^{-i(ms + nt)}\, d\mu \]

We will employ Cesaro means of functions $f \in C(\bbT^2).$  As  there are various possible ways of defining Cesaro means of functions of two
variables, we will use $ \si_{n, m}(f) ,$ the convolution of $f$ with $K_n\otimes K_m,$ where $K_n$ is the one-variable Fejer kernel.
However, there are other summability methods of functions of two variables which could be employed as well. (\cite{Zyg})

If $\al$ is a positive irrational number, we define $\sA_{\al}$ to be the set of continuous functions $f: \bbT^2 \to \bbC$ with the property that
\[ \hat{f}(m, n) = 0 \text{ whenever } m + \al n < 0 \]
By the continuity of the Fourier transform, $\sA_{\al}$ is a Banach space, and since the product of two functions in $\sA_{\al}$ again lies in $\sA_{\al},$ it is a Banach algebra under the norm
$||f|| = \sup_{(z, w) \in \bbT^2} |f(z, w)|.$

As a norm-closed subalgebra of the C$^*$-algebra $C(\bbT^2),\ \sA_{\al}$ is  a commutative operator algebra. It further falls in the category of uniform algebras, as
a subalgebra of $C(\bbT^2)$ it separates the points of $\bbT^2.$

For $f \in C(\bbT^2),$ let $f^*$ denote the adjoint, i.e., $f^* = \bar{f},$ the complex conjugate.  Note that $\sA_{\al}$ is \emph{antisymmetric}, that is, $\sA_{\al}\cap \sA_{\al}^*
= \{ \bbC \cdot 1\}.$

The \emph{characters} of the group $\bbT^2$ will be denoted by $\chi_{m, n}, $ where
\[ \chi_{m, n}(z, w) = z^m w^n,\ (z, w) \in \bbT^2. \]
The characters $\chi_{m, n} $ for which
$m + \al n \geq 0$ belong to $\sA_{\al},$ and linear combinations of characters in $\sA_{\al}$ are dense.

Note that $\sA_{\al}$ is a \emph{Dirichlet} algebra; that is $\sA + \sA^* $ is dense in $C(\bbT^2).$ This is clear, since $\sA + \sA^* $ contains all the characters of $\bbT^2.$

Recall that a uniform subalgebra $\sA$ of $C(X)$ is called \emph{maximal} if for any norm-closed algebra $\sB$ satisfying $\sA \subset \sB \subset C(X), $ then either
$\sB = \sA$ or $\sB = C(X).$ It is known that $\sA_{\al}$ is a maximal subalgebra of $C(\bbT^2).$ (\cite{Gam69})

Central to our work are results relating to invertible, continuous functions on compact Hausdorff spaces. The next
result is standard.
\begin{lemma} \label{l:invertonT}
Every invertible function $f$ in $C(\bbT)$ has the form $f(z) = z^n \exp(g(z))$ for $n \in \bbZ$ and
 some function $g \in C(\bbT).$ The integer $n$ is uniquely determined.
 \end{lemma}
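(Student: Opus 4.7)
The plan is to use the covering map $\exp: \bbC \to \bbC^*$ and the fact that $[0, 2\pi]$ is simply connected. Since $f$ is invertible in $C(\bbT)$, it is nowhere zero, hence takes values in $\bbC^*$. Define $F: [0, 2\pi] \to \bbC^*$ by $F(\theta) = f(e^{i\theta})$.

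First I would produce a continuous logarithm $h: [0, 2\pi] \to \bbC$ with $\exp(h(\theta)) = F(\theta)$. This follows from the standard path-lifting property of the covering $\exp: \bbC \to \bbC^*$, applied to the contractible domain $[0, 2\pi]$ (once an initial value $h(0)$ with $\exp(h(0)) = F(0)$ is chosen). Because $F(0) = F(2\pi)$, the equation $\exp(h(2\pi)) = \exp(h(0))$ forces $h(2\pi) - h(0) = 2\pi i n$ for a uniquely determined integer $n$.

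Next I would peel off the winding number. Set $\tilde{g}(\theta) := h(\theta) - i n \theta$; then $\tilde{g}(0) = \tilde{g}(2\pi)$, so $\tilde{g}$ descends to a continuous function $g: \bbT \to \bbC$ via $g(e^{i\theta}) := \tilde{g}(\theta)$. Unwinding the definitions,
\[
f(e^{i\theta}) = \exp(h(\theta)) = e^{i n \theta}\exp(\tilde{g}(\theta)) = (e^{i\theta})^n \exp\!\bigl(g(e^{i\theta})\bigr),
\]
which yields the desired representation $f(z) = z^n \exp(g(z))$.

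For uniqueness, suppose $z^n \exp(g(z)) = z^m \exp(g'(z))$ with $g, g' \in C(\bbT)$. Then $z^{n-m} = \exp(g'(z) - g(z))$, so the left side admits a continuous logarithm on $\bbT$. Repeating the lifting argument for $z \mapsto z^{n-m}$, the winding increment over $[0, 2\pi]$ equals $2\pi i (n - m)$ on one hand and $0$ on the other (as $g' - g$ is already defined on $\bbT$), forcing $n = m$. The only potential obstacle is invoking the lift cleanly; I would simply cite the covering-space lifting lemma (or, equivalently, construct $h$ by hand on a finite partition of $[0, 2\pi]$ fine enough that $F$ stays in an open half-plane on each subinterval, where a branch of $\log$ is available).
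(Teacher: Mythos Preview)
Your argument is correct and is the standard winding-number/path-lifting proof of this classical fact. The paper does not actually prove the lemma at all: it simply cites Lemma~3.5.14 of Murphy's \emph{C$^*$-Algebras and Operator Theory}, so there is no independent approach to compare with---your self-contained argument is precisely the kind of proof that reference encapsulates.
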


 \begin{proof} This is Lemma 3.5.14 of \cite{Mur90}.
 \end{proof}

 We will need the analogous result for $\bbT^2 .$  For lack of a convenient reference, we provide an elementary proof.

 \begin{lemma} \label{l:invertibleonT2}
 Every invertible function $f$ in $C(\bbT^2)$ has the form
 \[ f = \chi_{m, n}\exp(g), \text{ for some } g \in  C(\bbT^2) .\]
 Furthermore the character $\chi_{m, n}$ is uniquely determined.
 \end{lemma}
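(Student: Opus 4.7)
The plan is to lift $f$ through the universal covering map of $\bbT^2$ and then detect two integer-valued "winding" obstructions. Let $\pi : \bbR^2 \to \bbT^2$, $\pi(s,t) = (e^{is}, e^{it})$. Since $\bbR^2$ is simply connected and $\exp : \bbC \to \bbC \setminus \{0\}$ is a covering map, the nonvanishing continuous function $f \circ \pi$ admits a continuous logarithm $F : \bbR^2 \to \bbC$ with $e^F = f \circ \pi$; this is a standard path-lifting argument.

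Next I would extract $m$ and $n$ as the two periods of $F$. Periodicity of $f \circ \pi$ forces $F(s+2\pi, t) - F(s,t) \in 2\pi i \bbZ$ and $F(s, t+2\pi) - F(s,t) \in 2\pi i \bbZ$ at every $(s,t)$. By continuity in $(s,t)$ and discreteness of the target, each of these differences is constant, say $2\pi i m$ and $2\pi i n$ respectively. Set $G(s,t) = F(s,t) - ims - int$. Then $G$ is $2\pi$-periodic in each variable and therefore descends to a continuous function $g \in C(\bbT^2)$ with $g(e^{is}, e^{it}) = G(s,t)$. Unwinding definitions,
\[
f(e^{is}, e^{it}) = e^{F(s,t)} = (e^{is})^{m}(e^{it})^{n} e^{G(s,t)} = \chi_{m,n}(e^{is}, e^{it})\, e^{g(e^{is}, e^{it})},
\]
which yields $f = \chi_{m,n}\exp(g)$.

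For uniqueness, suppose $\chi_{m,n}\exp(g) = \chi_{m',n'}\exp(g')$ on $\bbT^2$. Then $\chi_{m-m',\, n-n'} = \exp(g' - g)$. Restricting to the circle $w = 1$ yields $z^{m-m'} = \exp\bigl((g'-g)(z,1)\bigr)$ in $C(\bbT)$, so the uniqueness clause of Lemma~\ref{l:invertonT} forces $m = m'$; restricting to $z = 1$ likewise forces $n = n'$.

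The only nontrivial point is the assertion that each of the two period differences above is genuinely constant rather than merely locally constant in $(s,t)$, but this is immediate from the connectedness of $\bbR^2$ together with the discreteness of $2\pi i \bbZ \subset \bbC$. An alternative, more pedestrian route would apply Lemma~\ref{l:invertonT} slice-by-slice, show that the one-variable winding index is locally constant and hence constant in the transverse variable, and iterate in the second slot; but the covering-space formulation is shorter and produces both indices symmetrically.
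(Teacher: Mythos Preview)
Your proof is correct and takes a genuinely different route from the paper's. You lift $f\circ\pi$ through the exponential covering of $\bbC\setminus\{0\}$ using the simple connectedness of $\bbR^2$, read off the two integer periods of the lift, and subtract them to obtain a doubly periodic logarithm. The paper instead works slice by slice: it applies Lemma~\ref{l:invertonT} to the restrictions $f(z,1)$ and $f(1,w)$ to extract $m$ and $n$, divides these out to produce a residual invertible $h$ with $h(z,1)=h(1,w)=1$, and then must argue separately that $h$ lies in the identity component of the invertibles of $C(\bbT^2)$ --- this last step is done by exhibiting, for each fixed $w_0$, a path in $C(\bbT)$ from the constant $1$ to $h(\,\cdot\,,w_0)$, and then normalizing the resulting one-variable logarithms so that they depend continuously on $w_0$. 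Your covering-space argument is shorter and treats the two coordinates symmetrically; the paper's approach stays closer to the one-variable lemma but has to work harder to glue the slicewise logarithms into a continuous two-variable function. You essentially anticipated this comparison in your final paragraph.
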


 \begin{proof} Let $c = f(1, 1).$ If $\frac{1}{c} f$ has the desired form, then so does $f.$  Thus we may assume that $f(1, 1) = 1.$

 Given $f$ invertible in $C(\bbT^2),$ let $f_1(z) = f(z, 1)$ and $f_2(w) = f(1, w).$ Then
 $f_1,\ f_2 \in C(\bbT)$ and are invertible, so by Lemma~\ref{l:invertonT} there exist integers $m, \ n,$
 which are uniquely determined, and functions $g_1,\ g_2 \in C(\bbT)$ so that
 $f_1(z) = z^m\exp(g_1(z))$ and $f_2(w) = w^n\exp(g_2(w)).$

 Let $h(z, w) = z^{-m}\exp(-g_1(z)) w^{-n}\exp(-g_2(w)) f(z, w).$ Thus $h$ satisfies
 \[ h(z, 1) = 1 = h(1, w) \text{ for all } z,\ w \in \bbT .\]

 We claim that $h = \exp(k)$ for some $k \in C(\bbT^2).$ By Corollary~2.15 of \cite{Doug98},
 this is equivalent to showing that $h$ lies in the connected component of the constant function $1$
 in $C(\bbT^2).$

 Now let $w_0 \in \bbT \setminus\{1\},$ and let $t_0 \in (0, 2\pi)$ be such that $e^{it_0} = w_0.$
 Let $h_{w_0} \in C(\bbT)$ be the function $h_{w_0}(z) =  h(z, w_0).$ Observe that $h_{w_0}$ is
 path homotopic to the constant $1.$
 Define  $\ga(t)(\cdot) = h(\cdot, e^{it}),\ 0 \leq t \leq t_0$ and note that  $\ga(0) $ is the constant function
 $1,\ \ga(t_0) = h_{w_0},$ and that $\ga(t)(1) = h(1, e^t) = 1,\ t \in [0, t_0].$ We conclude that
 $h_{w_0}$ lies in the connected component of the identity of the invertibles in $C(\bbT),$ hence has the
 form $h_{w_0} = \exp(k_{w_0})$ for some $k_{w_0} \in C(\bbT),$ by Lemma~\ref{l:invertonT}
  Now $k_{w_0}$ is not unique, but as
 $h_{w_0}(1) = 1,$ we have that $k_{w_0}(1) \in 2\pi i\, \bbZ.$ If we specify that $k_{w_0}(1) = 0,$ then
 $k_{w_0}$ is unique.  If we do this for each $w \in \bbT,$ then the map $w \mapsto k_w$ is continuous.

 Now define $k(z, w) = k_z(w).$ Then $h = \exp(k),$ as desired.
\end{proof}


\subsection{The Gelfand space of $\sA_{\al}$} \label{ss:Gelfand}
The set of bounded, multiplicative linear functionals on a commutative Banach algebra $\sA$
 can be topologized with the weak* topology, making it a compact metric space, which is called the maximal ideal space, or
Gelfand space of $\sA,$ which we will denote $\Delta(\sA).$ In case $\sA = \sA_{\al},$ we will write
$\Delta(\sA_{\al})$ as  $\Delta_{\al}.$

Let $\bar{\bbD}$ be the closed unit disc $\{ z \in \bbC: |z| \leq 1 \}$ and $\bar{\bbD}^2 = \bar{\bbD}\times \bar{\bbD}$ the bidisc.
Let $\sB$ be the bidisc algebra, that is, the algebra of functions continuous on the bidisc and bianalytic in the interior.
As the Banach algebra satisfies
\[ \sB \subset \sA_{\al} \subset C(\bbT^2) \]
it follows that the Gelfand spaces satisfy the reverse inclusions:
\[ \Delta(C(\bbT^2))  \subset \Delta(\sA_{\al}) \subset(\Delta(\sB)) \]
In other words,
\[ \bbT^2 \subset \Delta_{\al} \subset \bar{\bbD}^2 \]

In fact, the Gelfand space $\Delta_{\al}$ has been characterized as follows:

\[ \Delta_{\al} = \{ (z, w) \in \bar{\bbD}^2: |w| = |z|^{\al}\} \]
See \cite{Ency}.

If $\sA$ is a uniform algebra on $X,$ we say that
$\Theta_1,\ \Theta_2 \in \Delta(\sA),$ \emph{belong to the same part of} $\Delta(\sA)$ if
\[ || \Theta_1 - \Theta_2|| < 2 \]
where the norm is the norm in the dual space of $\sA,$ that is
\[ || \Theta_1 - \Theta_2|| = \sup\{ |\Theta_1(f) - \Theta_2(f)|: f \in \sA, ||f|| = 1 \} .\]

For the algebra $\sA_{\al},$ it is known that (\cite{Wer61})
\begin{enumerate}
\item Each point $(z_0, w_0)$ in (the Shilov boundary) $\bbT^2$ is in a singleton part;
\item The point $(0, 0)$ is in a singleton part.
\end{enumerate}

\begin{remark} \label{r:parts}
From the observations above it follows that
the set $\{(z, w) \in \Delta_{\al}: \ 0 < |z| < 1\}$ is the union of parts.  We have proved that no part in this set is a singleton, though we have not included the proof here,
as we believe this fact is known. (A proof can be found in \cite{San}, section 3.3.)
\end{remark}


\section{Isometric Isomorphisms of the algebras $\sA_{\al}$}

\subsection{Method of rational approximation} \label{ss:ratapprox}

Suppose $f \in \sA_{\al}$ has a Fourier series with only finitely many terms, so
\[ f = \sum_{k=1}^N c_k\, \chi_{m_k, n_k} .\]

Consider the interval $I = \{ t \geq 0 : m_k + t n_k \geq 0,\ 1 \leq k \leq N \}.$ This is an interval containing
$\al$ in its interior.  Let $p,\, q$ be positive integers, such that $p/q \in I.$
Furthermore, since the map $\{ (m_k, n_k): 1 \leq k \leq N\} \mapsto m_k + \al n_k$ is one-to-one, one can
choose  $p/q$ sufficiently close to $\al$ so that the map $\{ (m_k, n_k): 1 \leq k \leq N\} \mapsto
m_k + (p/q)n_k$ is one-to-one.  Thus, if we define the function

\[ F(\zeta) = f(\zeta^q, \zeta^p) \quad (\dag) \]
then $F$ is a polynomial in $\zeta$ with $N$ (non-zero) terms.

Let $p_n, \ q_n, \ n \in \bbN$ be sequences of positive integers such that $p_n/q_n$ converges to $\al.$  Define a sequence of measures $\mu_n$ on $C(\bbT^2)$ by
\[ \mu_n(f) := \int f \, d\mu_n := \frac{1}{2\pi}\int_{-\pi}^{\pi} f(e^{i q_n \theta}, e^{i p_n\theta}) \, d\theta .\]
Let $\mu$ denote normalized Lebesgue measure on $\bbT^2.$

\begin{lemma} \label{l:convmeasures}
The sequence $\{ \mu_n\}$ converges in the weak $*$-topology to $\mu.$
\end{lemma}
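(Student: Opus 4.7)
The plan is to reduce the weak-$*$ convergence to checking it on a dense subspace of $C(\bbT^2)$, namely the linear span of the characters $\chi_{j,k}$. Since each $\mu_n$ is a probability measure (pushforward of normalized Lebesgue measure on $\bbT$ under $\theta \mapsto (e^{iq_n\theta}, e^{ip_n\theta})$) and $\mu$ is as well, we have $\|\mu_n\|, \|\mu\| \le 1$, so pointwise convergence on a dense subspace will suffice for weak-$*$ convergence on all of $C(\bbT^2)$ by a standard $\varepsilon/3$ approximation argument.

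First I would compute $\mu_n(\chi_{j,k})$ directly: substituting gives
\[ \mu_n(\chi_{j,k}) = \frac{1}{2\pi}\int_{-\pi}^{\pi} e^{i(jq_n + kp_n)\theta}\, d\theta, \]
which equals $1$ if $jq_n + kp_n = 0$ and $0$ otherwise. For $(j,k) = (0,0)$ both sides give $1 = \mu(\chi_{0,0})$. For $(j,k) \ne (0,0)$, I want to show $\mu_n(\chi_{j,k}) = 0$ for all sufficiently large $n$, so that the limit matches $\mu(\chi_{j,k}) = 0$.

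The key observation is that $jq_n + kp_n = 0$ forces $p_n/q_n = -j/k$ (when $k \ne 0$), a fixed rational number. Since $p_n/q_n \to \alpha$ and $\alpha$ is irrational, the equality $p_n/q_n = -j/k$ can hold for at most finitely many $n$; if $k = 0$ then $jq_n = 0$ with $q_n > 0$ forces $j = 0$, contradicting $(j,k) \ne (0,0)$. Thus $\mu_n(\chi_{j,k}) \to 0$ for every nonzero character.

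By linearity, $\mu_n(P) \to \mu(P)$ for every trigonometric polynomial $P$ on $\bbT^2$. Given $f \in C(\bbT^2)$ and $\varepsilon > 0$, choose a trigonometric polynomial $P$ (for instance a Cesaro mean $\sigma_{N,M}(f)$) with $\|f - P\|_\infty < \varepsilon/3$, then pick $n$ large enough that $|\mu_n(P) - \mu(P)| < \varepsilon/3$; the triangle inequality together with $\|\mu_n\|, \|\mu\| \le 1$ then gives $|\mu_n(f) - \mu(f)| < \varepsilon$. I do not expect any real obstacle here; the only subtlety is ensuring one uses that $\alpha$ is irrational (so that no rational $-j/k$ equals the limit of $p_n/q_n$), which is precisely what excludes $\mu_n(\chi_{j,k})$ from being $1$ infinitely often.
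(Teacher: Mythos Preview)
Your proof is correct and follows essentially the same approach as the paper: both verify convergence on characters using the irrationality of $\alpha$, then pass to general $f$ by approximation with trigonometric polynomials (Cesaro means). Your direct $\varepsilon/3$ argument using the uniform bound $\|\mu_n\|\le 1$ is in fact slightly more economical than the paper's detour through weak-$*$ compactness and a subsequence-of-subsequence argument.
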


\begin{proof}
First we show that for any character $\chi, \lim_n \int \chi \, d\mu_n = \int \chi \, d\mu .$
Indeed, if $\chi = \chi_{0, 0} = 1,$ then $\int \chi \, d\mu_n = 1 = \int \chi \, d\mu,$ since the measures are all positive of mass $1.$  If $\chi = \chi_{m, n}$ with
$(m, n) \neq (0, 0),$ then $m + \al n \neq 0, $ so that $m + (p_k/q_k) n \neq 0,$ for $k$ sufficiently large, hence $mq_k + np_k \neq 0.$ Then
\[ \int \chi \, d\mu_k = \frac{1}{2\pi} \int_{-\pi}^{\pi} e^{i(mq_k + np_k)\theta} \, d\theta = 0 \text{ and } \int \chi\, d\mu = 0 . \]

Thus the desired result holds for any $f \in C(\bbT^2)$ which is a finite linear combination of characters, which is a dense subalgebra of $C(\bbT^2).$

Now by the weak $*$-compactness of unit ball in the dual of $C(\bbT^2),$ there is a subnet of $\mu_n$ which converges, and by the metrizability of the dual space, the subnet
can be taken as a subsequence.  By re-labeling, we may denote the convergent subsequence by $\{ \mu_n\}.$

Next let $f \in C(\bbT^2),$
and $\ep > 0.$  Let $f_1$ be a Cesaro mean of $f$ such that $||f - f_1|| < \ep.$ Then
\begin{align*}
|\lim_{n \to \infty} \mu_n(f) - \mu(f)| &\leq |\lim_{n \to \infty} \mu_n(f - f_1)| + |\mu(f - f_1)| \\
	&\leq \lim_{n \to \infty} \mu_n(\ep) + \ep \\
	&\leq 2\ep
\end{align*}

Now, the same argument shows that any subsequence of the original sequence $\{ \mu_n\}$ in turn has a subsequence converging to $\mu.$  Thus $\{ \mu_n\}$
converges weak $*$ to $\mu.$
\end{proof}

\begin{lemma} \label{l:factorization}
Let $f \in \sA_{\al}$ and suppose $f = \chi_{m, n} \exp(g)$ for some character $\chi_{m, n}$ and some $g \in C(\bbT^2).$ Then both $\chi_{m, n} $ and $\exp(g) $ belong
to $\sA_{\al},$ and the extension of $\exp(g)$ to the Gelfand space $\Delta_{\al}$  does not vanish at the point $(0, 0).$
\end{lemma}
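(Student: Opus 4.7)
The overall plan is to reduce the problem to a one-variable question about the disk algebra, via two approximations in tandem: replace $f$ by its Cesaro means $f_N := \si_{N,N}(f)$, and then restrict to the rational sub-circles $\zeta \mapsto (\zeta^{q_k}, \zeta^{p_k})$ with $p_k/q_k \to \al$ as in Lemma~\ref{l:convmeasures}. On these sub-circles each $f_N$ becomes a polynomial in $\zeta$, so winding numbers and the argument principle can be brought to bear.

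First I would note that $f_N \in \sA_\al$ is a trigonometric polynomial converging uniformly to $f$. Since $f = \chi_{m,n}\exp(g)$ is invertible in $C(\bbT^2)$ (so Lemma~\ref{l:invertibleonT2} applies), $f_N$ is invertible for $N$ large; by the uniqueness there together with local constancy of the integer invariant $(m,n)$ under small sup-norm perturbations, one has $f_N = \chi_{m,n}\exp(g_N)$ with the same character, and $g_N \to g$ uniformly (after fixing the $2\pi i\bbZ$ ambiguity). Because $\al$ is irrational and the Fourier support of $f_N$ is finite, for $k$ large every nonzero $(j,l)$ in that support satisfies $jq_k + lp_k > 0$; hence $F_k^{(N)}(\zeta) := f_N(\zeta^{q_k},\zeta^{p_k})$ is a polynomial in $\zeta$ (no negative powers) and is non-vanishing on $\bbT$ for $N$ large. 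Its winding on $\bbT$ equals $mq_k + np_k$ via the decomposition (the exponential factor contributes zero winding) and, by the argument principle in the disk algebra, also equals the number of zeros of $F_k^{(N)}$ in $\bbD$, which is $\geq 0$. Dividing by $q_k$ and letting $k \to \infty$ gives $m + n\al \geq 0$, so $\chi_{m,n} \in \sA_\al$.

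For $\exp(g) \in \sA_\al$ I would apply the same machinery to $\chi_{-m,-n} f = \exp(g)$. Its restriction to the sub-circle equals $\zeta^{-(mq_k+np_k)} F_k^{(N)}(\zeta)$, which on $\bbT$ coincides with $\exp(g_N(\zeta^{q_k},\zeta^{p_k}))$, hence is non-vanishing with winding zero. This should force all the zeros of $F_k^{(N)}$ in $\bbD$ to sit at the origin, so that the lowest Fourier frequency of $f_N$ in the $j + l\al$ ordering is exactly $(m,n)$. Shifting by $-(m,n)$ and taking $k, N \to \infty$ then places $\mathrm{supp}(\widehat{\exp(g)}) \subset \{j + l\al \geq 0\}$, so $\exp(g) \in \sA_\al$. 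The non-vanishing at $(0,0)$ follows since the extension value there is $\widehat{\exp(g)}(0,0)$: the symmetric argument places $\exp(-g) \in \sA_\al$, and the character property $\phi_{(0,0)}(uv) = \phi_{(0,0)}(u)\phi_{(0,0)}(v)$ for $u,v \in \sA_\al$ (which comes from irrationality of $\al$, as in Section~2) applied to $\exp(g)\exp(-g) = 1$ rules out $\widehat{\exp(g)}(0,0) = 0$.

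The step I expect to be the main obstacle is the claim that the zeros of $F_k^{(N)}$ all collapse to the origin, equivalently that $(m,n)$ is the minimum of $j + l\al$ over $\mathrm{supp}(\widehat{f_N})$. This goes beyond the continuity-plus-winding reasoning that suffices for the first part; it is where non-vanishing information about $\exp(g)$ on the interior of $\Delta_\al$, not merely on $\bbT^2$, must be put to use, and where the hypothesis needs to be squeezed carefully.
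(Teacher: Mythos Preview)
Your overall strategy---Ces\`aro means to reduce to trigonometric polynomials, then restriction to the rational sub-circles $\zeta\mapsto(\zeta^{q_k},\zeta^{p_k})$ to land in the disc algebra, then winding numbers and the argument principle---is exactly the paper's, and your deduction that $m+n\al\ge 0$ (hence $\chi_{m,n}\in\sA_\al$) is correct and matches the paper's. You are also right that the decisive step is whether all zeros of $F_k^{(N)}$ in $\bbD$ sit at the origin. The paper's proof handles this via inner--outer factorization: it writes the Blaschke factor of $F$ as $\zeta^N\prod_i\frac{\zeta-a_i}{1-\bar a_i\zeta}$ and then asserts $\exp(G)=\bigl(\prod_i\frac{\zeta-a_i}{1-\bar a_i\zeta}\bigr)F_0$ lies in the disc algebra. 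But $N$ there is the winding number of $F$, i.e.\ the \emph{total} number of zeros in $\bbD$, so this identity is only consistent when there are no off-origin zeros---which is precisely the point at issue.

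In fact the step fails, and the lemma as stated is not true. Take $1<\al<2$ and $f=\chi_{2,0}+\tfrac12\chi_{0,1}=z^2+\tfrac12 w\in\sA_\al$. The winding of $f(z,1)=z^2+\tfrac12$ is $2$ and that of $f(1,w)=1+\tfrac12 w$ is $0$, so $(m,n)=(2,0)$, and $\exp(g)=1+\tfrac12\chi_{-2,1}\notin\sA_\al$ since $-2+\al<0$. On the sub-circle with $p<2q$ one has $F(\zeta)=\zeta^{2q}+\tfrac12\zeta^{p}=\zeta^{p}(\zeta^{2q-p}+\tfrac12)$, which has $2q-p$ zeros of modulus $(1/2)^{1/(2q-p)}\in(0,1)$ away from the origin; correspondingly $\exp(G)=F/\zeta^{2q}$ is \emph{not} in the disc algebra. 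What saves the paper's applications is the extra hypothesis, present in Proposition~\ref{p:exponents}, that $f$ does not vanish on $\Delta_\al\setminus\{(0,0)\}$: then, by the continuity argument used in the proof of Lemma~\ref{l:sqroot}, for $p_k/q_k$ sufficiently close to $\al$ the polynomial $F_k$ has no zeros in $\bbD\setminus\{0\}$, your ``zeros collapse to the origin'' claim holds, and the remainder of your outline goes through. Your intuition in the last paragraph---that interior non-vanishing on $\Delta_\al$ is what is really needed---is exactly right; it just is not available from the hypotheses as stated. Finally, your route to non-vanishing at $(0,0)$ via ``$\exp(-g)\in\sA_\al$'' is not justified by any symmetric argument (complex conjugation takes you out of $\sA_\al$); the paper instead reads off the nonzero constant Fourier coefficient directly from the Ces\`aro approximants.
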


\begin{proof} First assume that $f$ is a finite linear combination of characters.  By the method of rational approximation, we can find integers $p, q$ with $p/q $ sufficiently close to
$\al$ so that the function $F(\zeta) = f(\zeta^q, \zeta^p) \in \sA(\bbD).$  Since $f$ is invertible on $\bbT^2,$ it follows $F$ is invertible on $\bbT,$ so by Lemma~\ref{l:invertonT}
$F$ has the form $F(\zeta) = \zeta^N \exp(G),$ with $N \geq 0.$  On the other hand, setting $N_1 = mq + np, $ we have $F(\zeta) = \zeta^{N_1} \exp(G_1),$ with $G_1(\zeta) = g(\zeta^q, \zeta^p).$
By the uniqueness assertion of the Lemma, $N_1 = N,$ and hence $G - G_1 \in 2\pi \bbZ.$
Now if $B$ is the Blaschke factor (which is the inner factor)
 in the inner-outer factorization of $F,$ then $B(\zeta) = \zeta^N(\frac{\zeta-a_1}{1 - \bar{a}_1 \zeta}\cdots \frac{\zeta-a_M}{1 - \bar{a}_M \zeta})$ where  $0 < |a_t| < 1,\ 1 \leq t \leq M.$
Furthermore, each factor in the Blaschke product belongs to the disc algebra, in particular the first factor.  Also, $\exp(G(\zeta)) =
 (\frac{\zeta-a_1}{1 - \bar{a}_1 \zeta}\cdots \frac{\zeta-a_M}{1 - \bar{a}_M \zeta}) F_0(\zeta),$
where $F_0$ is the outer factor.  Since these are in the disc algebra, so is $\exp(G).$ Note that since $N$ is the order of the zero of $F$ at the origin, $\frac{F(\zeta)}{\zeta^N}$
is non-zero at the origin.  So, the extension of $\exp(G)$ to the disc does not vanish at the origin.

Next we claim that both $\chi_{m, n}$ and $\exp(g) \in \sA_{\al}.$ Let $p_k, \ q_k $ be positive integers such that $\{\frac{p_k}{q_k}\}$ converges to $\al,$ and let $a, b$ be integers such that
$a + \al b \geq 0.$  Let $F_k(\zeta) = f(\zeta^{q_k}, \zeta^{p_k}) = \zeta^{N_k}\exp(G_k(\zeta)) .$
Then
\[ \int_{\bbT^2} \chi_{a, b}(z, w) \exp(g(z, w)) \, d\mu = \lim_k \int_{\bbT}\zeta^{aq_k + bp_k}\exp(G_k(\zeta^{q_k}, \zeta^{p_k}))\, d\theta = 0,\]
$\zeta = e^{i\theta}$, for $k$ sufficiently large, as $\exp(G_k) $ is in the disc algebra.

A similar argument shows that $\chi_{m, n} \in \sA_{\al}.$

 Since the polynomial $\exp(G_k)$ is nonzero at the origin, it contains a nonzero multiple of the constant function as a Fourier coefficient, hence the same is true of $\exp(g).$

Now for the general case, where  $f = \chi_{m, n} \exp(g) \in \sA_{\al},$ we can apply the above argument to the Cesaro means of $f.$  Note functions in the set
$\chi_{m, n} \exp(C(\bbT^2))$ constitute one of the connected components of the invertible functions in $C(\bbT^2),$ and in particular, this set is open, so that any Cesaro mean
sufficiently close to $f$ has this form. Thus considering the Cesaro means, we obtain a sequence of functions $\exp(g_n)$ converging
to $\exp(g),$ such that $\exp(g_n) \in \sA_{\al}$ for sufficiently large $n.$  Thus $\exp(g) \in \sA_{\al}.$

Finally, the extension of the function $\exp(g)$ to the Gelfand space $\Delta_{\al}$ is non-zero at the point $(0, 0).$ This is due to the nature of the Cesaro approximations: the
non-zero Fourier coefficients  of the Cesaro approximants is a subset of the non-zero Fourier coefficients of the function $\exp(g).$  Since all of the Cesaro approximants $\exp(g_n)$
contain
a non-zero multiple of the constant character, at least for sufficiently large $n, $ the $(0, 0)$ Fourier coefficient of $\exp(g)$ is non-zero.
\end{proof}

\begin{lemma} \label{l:sqroot}
Suppose $g \in C(\bbT^2)$ is such that $\exp(g) \in \sA_{\al},$ and the extension of $\exp(g)$ to $\Delta_{\al}$ is never zero.  Then $\exp(g/2) \in \sA_{\al}.$
\end{lemma}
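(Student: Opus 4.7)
My plan is to combine Cesaro approximation on $\bbT^2$ with the rational approximation method of Subsection~\ref{ss:ratapprox}, use the simple connectedness of $\bar{\bbD}$ to extract a holomorphic square root, and finally read off the Fourier coefficients of $\exp(g/2)$ using the measures of Lemma~\ref{l:convmeasures}. As a preliminary, I note that the hypothesis that the Gelfand extension of $\exp(g)$ is nowhere zero on $\Delta_{\al}$ makes $\exp(g)$ invertible in $\sA_{\al}$, so $\exp(-g) \in \sA_{\al}$ as well.

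Next I would introduce the Cesaro means $P_n = \si_n(\exp(g))$ and $Q_n = \si_n(\exp(-g))$: both are trigonometric polynomials in $\sA_{\al}$ converging uniformly on $\bbT^2$ to $\exp(\pm g)$. By Lemma~\ref{l:invertibleonT2}, the distinct sets $\chi_{m,n}\exp(C(\bbT^2))$ are the connected components of the invertibles of $C(\bbT^2)$, so $\exp(C(\bbT^2))$ is open and for $n$ large I may write $P_n = \exp(g_n)$ and $Q_n = \exp(\tilde g_n)$ with $g_n \to g$ and $\tilde g_n \to -g$ uniformly. For each such $n$, the rational approximation method produces an open interval $I_n$ containing $\al$ with the property that whenever $p/q \in I_n$ is rational, both $F_n(\zeta) = P_n(\zeta^q, \zeta^p)$ and $G_n(\zeta) = Q_n(\zeta^q, \zeta^p)$ are polynomials in $\zeta$ lying in $\sA(\bbD)$. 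Because $P_n Q_n \to 1$ uniformly on $\bbT^2$, for $n$ large $\|F_n G_n - 1\|_{\bbT} < 1/2$; by the maximum modulus principle this persists on $\bar{\bbD}$, so $F_n$ is invertible in $\sA(\bbD)$ and in particular nowhere zero on $\bar{\bbD}$. Since $\bar{\bbD}$ is simply connected, $F_n = \exp(L_n)$ for some $L_n \in \sA(\bbD)$; matching with $F_n|_{\bbT} = \exp(g_n(\zeta^q, \zeta^p))$ on the connected set $\bbT$ forces $L_n - g_n(\zeta^q, \zeta^p)$ to be a constant integer multiple of $2\pi i$, so $\exp(g_n/2)(\zeta^q, \zeta^p) = \pm\exp(L_n/2)$ lies in $\sA(\bbD)$.

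To conclude, I would pick positive integers $p_k, q_k$ with $p_k/q_k \to \al$ and, via a diagonal argument exploiting that $\al$ lies in the interior of every $I_n$, arrange indices $n_k \to \infty$ with $p_k/q_k \in I_{n_k}$ for every $k$. Setting $h = \exp(g/2)$ and $h_k = \exp(g_{n_k}/2)$, the previous paragraph shows $h_k(\zeta^{q_k}, \zeta^{p_k}) \in \sA(\bbD)$ while $h_k \to h$ uniformly on $\bbT^2$. For any $(a, b) \in \bbZ^2$ with $a + \al b < 0$, Lemma~\ref{l:convmeasures} gives $\hat h(a, b) = \lim_k \mu_k(\chi_{-a, -b} h)$; replacing $h$ by $h_k$ introduces an error bounded by $\|h - h_k\|_{\infty}$, which tends to $0$, and since $-aq_k - bp_k > 0$ for $k$ large, the integral $\mu_k(\chi_{-a,-b} h_k)$ extracts a strictly negative-index Fourier coefficient of an $\sA(\bbD)$ function and hence vanishes. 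Thus $\hat h(a, b) = 0$, which proves $\exp(g/2) \in \sA_{\al}$. The principal obstacle I anticipate is this final bookkeeping: the intervals $I_n$ may shrink to $\{\al\}$ as $n$ grows, and it is precisely the openness of each $I_n$ around $\al$ that allows the diagonal argument to close.
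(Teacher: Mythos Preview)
Your argument is correct and follows the same overall architecture as the paper's proof---Cesaro approximation, rational approximation to reduce to the disc algebra, and the measures of Lemma~\ref{l:convmeasures} to read off Fourier coefficients---but the key step of showing that $F_n(\zeta)=P_n(\zeta^{q},\zeta^{p})$ is nonvanishing on $\bar\bbD$ is handled differently. The paper exploits the hypothesis geometrically: since the trigonometric polynomial extends to a neighborhood $S=\{(z,w):|w|=|z|^t,\ a\le t\le b\}$ of $\Delta_\al$ in the bidisc and is nonzero there, any zero of $F$ on $\bar\bbD$ would land in $S$, a contradiction; it then appeals to inner--outer factorization to extract the square root. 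You instead observe that nonvanishing on $\Delta_\al$ makes $\exp(g)$ invertible in $\sA_\al$, so $\exp(-g)\in\sA_\al$, and the Cesaro mean $Q_n$ of $\exp(-g)$ furnishes an approximate inverse: $\|F_nG_n-1\|_{\bar\bbD}<1/2$ forces $F_n$ to be zero-free, after which simple connectedness of $\bar\bbD$ gives the holomorphic square root without any factorization theory. Your route is more self-contained and avoids both the bidisc geometry and the outer-function machinery; the paper's route is perhaps more conceptual in tying the nonvanishing directly to the shape of $\Delta_\al$. The diagonal bookkeeping you flag at the end is fine: since each $I_n$ is an open interval containing $\al$, one may simply take $n_k=k$ (for $k$ past the threshold where $\|P_kQ_k-1\|<1/2$) and choose any $p_k/q_k\in I_k$ with $p_k/q_k\to\al$; Lemma~\ref{l:convmeasures} applies to any such sequence.
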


\begin{proof} As in the previous lemma, we begin by assuming that $f = \exp(g) \in \sA_{\al} $ is expressible as a finite linear combination of characters. In that case, $f$ extends to a function on a subset $S$
of the closed bidisc, $S = \{ (z, w): |w| = |z|^t,\ a \leq t \leq b\}$ where $ 0 < a < \al < b.$  Since $f$ is nonzero on $\Delta_{\al}$ and uniformly continuous on $S,$ we may assume that $f$ is nonzero on $S,$
possibly by replacing $[a, b]$ by a smaller interval.

Suppose that $p, q$ are positive integers with $ a < p/q < b,$ and set  $F(\zeta) = f(\zeta^q, \zeta^p) ,$ so $F$ is in the disc algebra.  If for some $|\zeta_0| < 1,\ F(\zeta_0) = 0,$ then
$f(z_0, w_0) = 0,$ where $\zeta_0^q = z_0,\ \zeta_0^p = w_0.$ But then $|w_0| = r_0^p = |z_0| ^{\frac{p}{q}}, \  |\zeta_0| = r_0.$  Since $p/q \in (a, b),$ it follows that $(z_0, w_0) \in S,$
and hence $ 0 = F(\zeta_0) = f(z_0, w_0),$ a contradiction.

It follows that $F(\zeta)$ is outer.  Then, again by factorization, since $F(\zeta) = \exp(G), $ we obtain that $\exp(G/2) $ is in the disc algebra. (\cite{Hof62})

To see that $\exp(g/2) \in \sA_{\al},$ let $\frac{p_k}{q_k}$ be a sequence of fractions converging to $\al.$  Set $F_k(\zeta) = \exp(G_k(\zeta)).$  By the argument above, we have that
$\exp(G_k/2)$ is in the disc algebra, at least for sufficiently large $k.$  Then, for any $(m, n) \in \bbZ^2$ with $m + \al n \geq 0,$ we have
\[ \int_{\bbT^2} \chi_{m, n}(z, w) \exp(\frac{g}{2}(z, w)) \, d\mu = \lim_k \int_{\bbT} \zeta^{mq_k + np_k} \exp(\frac{G_k}{2}(\zeta)) \, d\theta = 0,  \]
$ \zeta = e^{i\theta}.$

For the general case, where $ f = \exp(g) \in \sA_{\al},$ apply the argument above to the Cesaro means $f_n$ of $f$ to get $f_n = \exp(g_n)$ with $\exp(g_n/2) \in \sA_{\al}.$
Thus $\exp(g_n/2)$  converges to $\exp(g/2),$ so that it is also in $\sA_{\al}.$
\end{proof}

\begin{proposition} \label{p:exponents}
Suppose $\chi_{m, n}$ is a character, and $g \in C(\bbT^2)$ are such that the function $f = \chi_{m, n} \exp(g) \in \sA_{\al},$  and $f$ does not vanish on $\Delta_{\al}\setminus \{(0, 0)\}.$
Then $\chi_{m, n}$ and $g $ lie in $\sA_{\al}.$
Furthermore, if $|\exp(g)| = 1$ on $\bbT^2,$ then $g$ is constant.
\end{proposition}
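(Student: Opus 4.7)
The plan is to reduce the first assertion to Lemmas~\ref{l:factorization} and \ref{l:sqroot}, and the furthermore to the antisymmetry of $\sA_\al$ recorded in Section~2. Lemma~\ref{l:factorization} applied to $f = \chi_{m,n}\exp(g)$ already places both $\chi_{m,n}$ and $\exp(g)$ in $\sA_\al$ and guarantees that the Gelfand extension $\widetilde{\exp(g)}$ is nonzero at the apex $(0,0)$ of $\Delta_\al.$ So the only remaining content of the first assertion is membership of $g$ itself in $\sA_\al.$

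To obtain $g,$ I would extract repeated square roots of $\exp(g)$ via Lemma~\ref{l:sqroot}, which requires $\widetilde{\exp(g)}$ to be nowhere zero on $\Delta_\al.$ Using the parametrization $\Delta_\al = \{(re^{i\theta}, r^\al e^{i\phi}) : 0 \leq r \leq 1\},$ the Gelfand transform of $\chi_{m,n}$ is $r^{m+\al n} e^{i(m\theta + n\phi)};$ the irrationality of $\al$ together with $\chi_{m,n} \in \sA_\al$ forces $m + \al n > 0$ whenever $(m,n) \neq (0,0),$ so this extension vanishes only at the apex. Multiplicativity of the Gelfand map gives $\widetilde f = \widetilde{\chi_{m,n}}\cdot\widetilde{\exp(g)},$ so the hypothesis on $f$ forces $\widetilde{\exp(g)}$ to be nonzero off $(0,0),$ while Lemma~\ref{l:factorization} handles the apex. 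Lemma~\ref{l:sqroot} now yields $\exp(g/2) \in \sA_\al,$ and since the same hypotheses persist for $g/2,$ iteration produces $\exp(g/2^k) \in \sA_\al$ for every $k \geq 1.$ A Taylor estimate gives
\[ 2^k\bigl(\exp(g/2^k) - 1\bigr) \to g \]
uniformly on $\bbT^2;$ each term on the left lies in $\sA_\al$ (using $1 \in \sA_\al$), and closedness of $\sA_\al$ then delivers $g \in \sA_\al.$

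For the furthermore, $|\exp(g)| = 1$ on $\bbT^2$ reads $\exp(\Re g) \equiv 1,$ so $\Re g \equiv 0$ and hence $g^* = -g.$ Combined with $g \in \sA_\al$ this yields $g^* \in \sA_\al,$ so $g \in \sA_\al \cap \sA_\al^* = \bbC\cdot 1$ by antisymmetry, and $g$ is constant.

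The only mildly delicate step is identifying the Gelfand extension of $\chi_{m,n}$ in the cone parametrization: when $m$ or $n$ is negative, $\chi_{m,n}$ has no natural extension to the bidisc, but the condition $m + \al n \geq 0$ forced by $\chi_{m,n} \in \sA_\al$ is precisely what makes $r^{m+\al n}e^{i(m\theta + n\phi)}$ well-defined and continuous on all of $\Delta_\al.$ Everything else is a direct application of the two preceding lemmas, the closedness of $\sA_\al,$ and antisymmetry.
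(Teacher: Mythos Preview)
Your proof is correct and follows essentially the same route as the paper: invoke Lemma~\ref{l:factorization} to place $\chi_{m,n}$ and $\exp(g)$ in $\sA_\al$ with nonvanishing at the apex, iterate Lemma~\ref{l:sqroot} to get $\exp(g/2^k)\in\sA_\al$, and pass to the limit $2^k(\exp(g/2^k)-1)\to g$; for the furthermore, both you and the paper reduce to antisymmetry. You are in fact more careful than the paper in spelling out why $\widetilde{\exp(g)}$ is nonzero on \emph{all} of $\Delta_\al$ (combining the hypothesis on $f$ off the apex with Lemma~\ref{l:factorization} at the apex), and in noting that the nonvanishing hypothesis persists under the square-root iteration.
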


\begin{proof} Lemma~\ref{l:factorization} shows that $\chi_{m, n} \in \sA_{\al},$ and $\exp(g) \in \sA_{\al}$ does not vanish on $\Delta_{\al}.$
Then, by Lemma~\ref{l:factorization} and repeated application of Lemma~\ref{l:sqroot}
we obtain that
\[ \exp(g),\ \exp(g/2),\  \dots , \exp(g/2^k), \dots \in \sA_{\al}. \]

Now if $t_k$ is any sequence of positive reals decreasing to $0,$ then
\[ g = \lim_{k \to \infty} \frac{\exp(t_k g) - 1}{t_k} \]
where the convergence is in norm.  Applying this with $t_k = \frac{1}{2^k},$ we obtain the desired result.

If $|\exp(g)| = 1$ on $\bbT^2,$ then $\exp(-g) = \exp(g^*) = \exp(g)^* \in \sA_{\al} \cap \sA_{\al}^* = \{\bbC \cdot 1 \},$
so, $g$ is constant.
\end{proof}




\subsection{Admissible homeomorphisms}
\begin{lemma} \label{l:automorphism}
Let $(a, b) \in \bbT^2.$  The map
\[\ga: \sA_{\al} \to \sA_{\al},\ \ga(f)(z, w) = f(az, bw) \]
 is an isometric automorphism of $\sA_{\al}.$
\end{lemma}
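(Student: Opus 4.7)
The plan is to verify the four properties separately: that $\ga$ lands in $\sA_{\al}$, that it is an algebra homomorphism, that it is isometric, and that it is bijective. Three of these are essentially trivial consequences of the fact that the translation $\tau_{a,b}:(z,w)\mapsto (az,bw)$ is a homeomorphism of $\bbT^2$. The only content is the first: why does composition with $\tau_{a,b}$ preserve the vanishing condition on Fourier coefficients that defines $\sA_{\al}$?

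For the well-definedness step, I would compute the Fourier coefficients of $\ga(f)$ directly. If $f \in C(\bbT^2)$ is a trigonometric polynomial $f = \sum c_{m,n}\chi_{m,n}$, then $\ga(f) = \sum c_{m,n} a^m b^n \chi_{m,n}$, so $\widehat{\ga(f)}(m,n) = a^m b^n \hat f(m,n)$. Since $|a|=|b|=1$, this identity extends to arbitrary $f \in C(\bbT^2)$ (either by applying the integral definition of the Fourier transform together with the substitution $(z,w)\mapsto(az,bw)$, which has Jacobian 1 under normalized Haar measure, or by density of trigonometric polynomials). Thus $\widehat{\ga(f)}(m,n)=0$ whenever $\hat f(m,n)=0$, and in particular whenever $m+\al n<0$, so $\ga(f)\in \sA_{\al}$.

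The algebra homomorphism property is immediate from $\ga(fg)(z,w) = (fg)(az,bw) = f(az,bw)g(az,bw) = \ga(f)(z,w)\ga(g)(z,w)$ and $\ga(1)=1$, together with linearity. The isometry property is also immediate: $\nor{\ga(f)}_{\infty} = \sup_{(z,w)\in\bbT^2}|f(az,bw)| = \sup_{(z',w')\in\bbT^2}|f(z',w')| = \nor{f}_{\infty}$, since $\tau_{a,b}$ is a bijection of $\bbT^2$. Finally, the map $\ga':\sA_{\al}\to\sA_{\al}$ defined by $\ga'(f)(z,w) = f(\bar a z,\bar b w)$ is well-defined by the same Fourier coefficient argument, and satisfies $\ga\circ\ga' = \ga'\circ\ga = \id$, establishing that $\ga$ is an automorphism.

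The only step that requires any care is the first; all other steps are routine. Even the first step is genuinely easy because translation on $\bbT^2$ acts on Fourier coefficients by a character of $\bbZ^2$, and in particular does not alter the support of $\hat f$. No deeper structural input about $\sA_{\al}$ (such as the rational approximation technique or the factorization lemmas established above) is required here.
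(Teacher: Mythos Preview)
Your proof is correct and follows essentially the same approach as the paper: both observe that $\ga$ extends to an isometric automorphism of $C(\bbT^2)$ and that the Fourier support is preserved because $\widehat{\ga(f)}(m,n)=a^mb^n\hat f(m,n)$. Your version is more explicit (writing out the inverse, the homomorphism identity, and the Fourier computation), but the underlying argument is the same.
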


\begin{proof} Clearly $\ga$, or more precisely, the extension of $\ga$ to $C(\bbT^2)$, is an isometric
automorphism of $C(\bbT^2).$ Furthermore, for any $f \in \sA_{\al},$ a character $\chi_{m, n}$ appears as
a non-zero Fourier coefficient of $f$ if and only if it appears in $\ga(f).$  Thus, $\ga$ maps $\sA_{\al}$
to itself.
\end{proof}

\begin{definition} \label{d:admissible}
Let $\sA,\ \sB$ be  uniform algebras with Gelfand spaces $\Delta(\sA),$\\
$\Delta(\sB)$ respectively.
  We will say that a homeomorphism
$\vpi: \Delta(\sB) \to \Delta(\sA) $ is \emph{admissible} if $f\circ \vpi \in \sA$ whenever $f \in \sB.$
\end{definition}

By Lemma~\ref{l:automorphism}, the homeomorphism of $\Delta_{\al}$ given by $(z, w) \mapsto (az, bw)$
 is admissible for any $(a, b) \in \bbT^2.$

If $\Phi: \sA_{\al} \to \sA_{\be}$ is an algebraic isomorphism, there is a weak*
 homeomorphism $\vpi: \Delta_{\be} \to
\Delta_{\al}$ defined by $f(\vpi(z, w)) = \Phi(f)(z, w).$ In other words, $\vpi$ is admissible
(Definition~\ref{d:admissible}).  However, if $\Phi$ is an isometric isomorphism, then more is true: $\vpi$
maps the Shilov boundary of $\Delta_{\be}$ to the Shilov boundary of $\Delta_{\al},$ and also maps
parts of $\Delta_{\be}$ to parts of $\Delta_{\al}.$  Thus, $\vpi$ maps $\bbT^2$ to itself, maps
the singleton part $\{ (0, 0)\} \in \Delta_{\be}$ to the corresponding part in $\Delta_{\al},$ and the set
$\{ (z, w) : |w| = |z|^{\be}, 0 < |z| < 1\} \in \Delta_{\be}$ to the corresponding set
$\{ (z, w): |w| = |z|^{\al}, 0 < |z| < 1 \} \in \Delta_{\al}.$ (See Remark~\ref{r:parts}.)

It should be noted that while these conditions are necessary conditions on $\vpi,$ it does not mean that
a homeomorphism satisfying these conditions is admissible.  Indeed, we will see that there are non-admissible
homeomorphisms satisfying these conditions.

\begin{theorem} \label{t:iso}
$\Phi: \sA_{\al} \to \sA_{\be}$ is an isometric isomorphism if and only if $\Phi$ has form
\mbox{$\Phi(f) = f\circ \vpi,$}
where $\vpi: \Delta_{\be} \to \Delta_{\al}$ is of the form
\[ (z, w) \mapsto (c_1z^{m_1}w^{n_1}, c_2z^{m_2} w^{n_2}) \]
where $c_j$ are unimodular constants, $j = 1, 2,$ and the matrix
\[ A = \left[ \begin{matrix}
m_1 & n_1 \\
m_2  & n_2
\end{matrix} \right] \in GL(2, \bbZ)\]
satisfies $m_1+\be n_1>0$ and $m_2 + \be n_2 = \al(m_1 + \be n_1).$
\end{theorem}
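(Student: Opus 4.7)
The plan is to handle the two directions separately. For the \emph{if} direction, assume $\vpi$ has the stated form and verify directly that $\Phi(f):=f\circ\vpi$ is an isometric isomorphism. Since the matrix $A\in GL(2,\bbZ)$, the map $\vpi$ restricts to a homeomorphism of $\bbT^2$, so $\Phi$ is an isometry on $C(\bbT^2)$. The key algebraic check is on characters: $\chi_{a,b}\circ\vpi = c_1^a c_2^b\,\chi_{am_1+bm_2,\,an_1+bn_2}$, and substituting the relation $m_2+\be n_2 = \al(m_1+\be n_1)$ yields
\[ (am_1+bm_2)+\be(an_1+bn_2) = (a+\al b)(m_1+\be n_1), \]
which is $\geq 0$ whenever $a+\al b\geq 0$, since $m_1+\be n_1 > 0$. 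Thus $\Phi(\sA_\al)\subseteq\sA_\be$, and the same argument applied to $A^{-1}$ provides the inverse.

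For the \emph{only if} direction, I will use that $\Phi$ is induced by an admissible homeomorphism $\vpi:\Delta_\be\to\Delta_\al$, which the discussion preceding the theorem shows maps $\bbT^2$ onto $\bbT^2$ and the singleton part $\{(0,0)\}$ onto $\{(0,0)\}$. The crucial computation is to identify $\Phi$ on the two coordinate characters $\chi_{1,0}$ and $\chi_{0,1}$. The function $\Phi(\chi_{1,0})$ is unimodular on $\bbT^2$ (since $\vpi$ preserves $\bbT^2$ and $|\chi_{1,0}|\equiv 1$ there), so by Lemma~\ref{l:invertibleonT2} it factors as $\chi_{m_1,n_1}\exp(g_1)$. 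Moreover, $\chi_{1,0}$ vanishes on $\Delta_\al$ only at $(0,0)$, hence $\Phi(\chi_{1,0})$ vanishes on $\Delta_\be$ only at $\vpi^{-1}(0,0) = (0,0)$. Proposition~\ref{p:exponents} then gives $\chi_{m_1,n_1},g_1\in\sA_\be$, while $|\exp(g_1)|\equiv 1$ on $\bbT^2$ forces $g_1$ to be constant. Therefore $\Phi(\chi_{1,0})=c_1\chi_{m_1,n_1}$ with $|c_1|=1$, and similarly $\Phi(\chi_{0,1})=c_2\chi_{m_2,n_2}$. Injectivity of $\Phi$ rules out $(m_1,n_1)=(0,0)$, giving $m_1+\be n_1>0$.

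To obtain $A\in GL(2,\bbZ)$, I apply the same analysis to $\Phi^{-1}$, producing an integer matrix $B$, then use multiplicativity of $\Phi$ to extend the formula $\Phi(\chi_{a,b})=c_1^a c_2^b\chi_{am_1+bm_2,\,an_1+bn_2}$ from the first quadrant to all $(a,b)\in\bbZ^2$ with $a+\al b\geq 0$ (by writing $\chi_{a,b}=\chi_{a+s,b+t}\cdot\chi_{s,t}^{-1}$ for $s,t\geq 0$ large enough that $(a+s,b+t)$ lies in the first quadrant). The identity $\Phi^{-1}\!\circ\!\Phi=\id$ applied to any character then forces $AB=I$. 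Finally, the defining relation $\chi_{j,k}(\vpi(z,w))=\Phi(\chi_{j,k})(z,w)$ applied at $(j,k)=(1,0)$ and $(0,1)$ reads off $\vpi(z,w)=(c_1z^{m_1}w^{n_1},c_2z^{m_2}w^{n_2})$ throughout $\Delta_\be$. Writing a point of $\Delta_\be$ in the form $(re^{i\theta_1},r^\be e^{i\theta_2})$, the requirement $\vpi(\Delta_\be)\subseteq\Delta_\al$ becomes $r^{m_2+\be n_2}=r^{\al(m_1+\be n_1)}$ for all $r\in[0,1]$, which is exactly the stated $\al$--$\be$ relation.

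The main obstacle is the identification $\Phi(\chi_{1,0})=c_1\chi_{m_1,n_1}$; everything else reduces to bookkeeping. This step requires the full strength of the Section~3 preparation: Lemma~\ref{l:invertibleonT2} to strip off the character factor on the Shilov boundary, and Proposition~\ref{p:exponents} to force the residual exponential to be a unimodular constant. What makes Proposition~\ref{p:exponents} applicable is the geometric observation that $\chi_{1,0}$'s extension to $\Delta_\al$ vanishes only at the singleton part $\{(0,0)\}$, a property preserved by $\vpi^{-1}$ because isometric isomorphisms send parts to parts.
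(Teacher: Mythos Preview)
Your proof is correct and follows essentially the same approach as the paper: both directions hinge on the action on characters, and in the ``only if'' direction you use the Shilov boundary, the singleton part $\{(0,0)\}$, Lemma~\ref{l:invertibleonT2}, and Proposition~\ref{p:exponents} exactly as the paper does to conclude $\Phi(\chi_{1,0})=c_1\chi_{m_1,n_1}$ and $\Phi(\chi_{0,1})=c_2\chi_{m_2,n_2}$. The only minor deviation is in establishing $A\in GL(2,\bbZ)$: the paper observes directly that $\vpi|_{\bbT^2}$ is a homeomorphism of $\bbT^2$ given (up to rotation) by the monomial map $(z,w)\mapsto(z^{m_1}w^{n_1},z^{m_2}w^{n_2})$, which forces $A\in GL(2,\bbZ)$, whereas you run the same analysis on $\Phi^{-1}$ to produce an integer matrix $B$ with $AB=I$; both arguments are valid and the paper's is slightly more direct.
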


\begin{proof}
Assume that $\Phi : \sA_\al \to \sA_\be$ is an isometric isomorphism. Then there is a homeomorphism $\vpi: \Delta_{\be} \to
\Delta_{\al}$ defined by $f(\vpi(z, w)) = \Phi(f)(z, w).$ Recall that the characters $\chi_{1, 0},\ \chi_{0, 1}$ are the coordinate functions
\[ \chi_{1, 0}(z, w) = z, \quad \chi_{0, 1} (z, w) = w. \]
Set $f_1 = \Phi(\chi_{1, 0}) = \chi_{1, 0} \circ \vpi,\ $ and similarly define $f_2,$ with $\chi_{0, 1}$ in place of $\chi_{1, 0}$,
so that $\vpi(z, w) = (f_1(z, w), f_2(z, w)).$

Then $f_j \in \sA_{\be}$ and since the homeomorphism $\vpi$ maps $\Delta_{\be} \to \Delta_{\al}$ and in
particular maps the Shilov boundary of $\Delta_{\be}$ to the Shilov boundary of $\Delta_{\al},$ this implies that
$|f_j| = 1 $ on $\bbT^2,\ j = 1, 2.$ Furthermore $\vpi$ maps the part $(0, 0) \in \Delta_{\be}$ to
the part $(0, 0) \in \Delta_{\al}.$  This implies that $f_j(z, w) = 0$ if and only if $(z, w) = (0, 0).$

Now since $f_j$ is invertible on $\bbT^2,$ by Lemma~\ref{l:invertibleonT2}
it has the form $f_j = \chi_{m_j, n_j} \exp(g_j),\ j = 1, 2.$
By Proposition~\ref{p:exponents}, $\exp(g_j) $ is constant, say equal to $c_j,$ with $|c_j| = 1.$
Since $f_j = c_j \chi_{m_j, n_j} \in \sA_{\be},$ we have that $ m_j + \be n_j \geq 0$.
And clearly $m_j + \be n_j > 0,$ for $f_j$ cannot be constant.

Since
\[ \vpi(z, w) = (f_1(z, w), f_2(z, w)) \in \Delta_{\al} \]
we have that
\begin{align*}
|f_2(z, w)| &= |f_1(z, w)|^{\al} \\
|\chi_{m_2, n_2}(z, w)|  &= |\chi_{m_1, n_1}(z, w)|^{\al}  \\
|z^{m_2} w^{n_2}| &= |z^{m_1} w^{n_1}|^{\al}   \\
|z|^{m_2} |z|^{\be n_2}  &= (|z|^{m_1} |z|^{\be n_1})^{\al}
\end{align*}
since $ |w| = |z|^{\be}$ in $\Delta_{\be}.$
Hence $m_2 + \be n_2 = \al(m_1 + \be n_1).$

Furthermore, since $\vpi$ is invertible, the map
\[ \bbT^2 \to \bbT^2,\ (z, w) \mapsto (z^{m_1}w^{n_1}, z^{m_2} w^{n_2}) \]
is invertible, so that the matrix
\[ A = \left[ \begin{matrix}
m_1 & n_1 \\
m_2  & n_2
\end{matrix} \right] \in GL(2, \bbZ).\]

Conversely, assume that $\Phi$ has form in the assumption. By composing $\Phi$ with the map in Lemma~\ref{l:automorphism}, we can assume that $c_1=1=c_2$. Let $m,n \in \bbZ$. Since $m_2+\be n_2=\al(m_1+\be n_1)$, we have $(mm_1+nm_2)+\be (mn_1+nm_2)=(m+\al n)(m_1+\be n_1)$. Moreover, since $m_1+\be n_1>0$, we get the necessary and sufficient condition :
\begin{align*}
m+\al n\geq 0 \text{ if and only if } (m+\al n)(m_1+\be n_1)\geq 0.
\end{align*}
Let $\chi_{m,n}$ be a character of $\sA_\al$. Then
\begin{align*}
(\chi_{m,n}\circ \vpi) (z,w)=\chi_{m,n}(\vpi(z,w))&=\chi_{m,n}(z^{m_1}w^{n_1},z^{m_2}w^{n_2})\\
&=z^{mm_1+nm_2}w^{mn_1+nm_2}\\
&=\chi_{mm_1+nm_2,mn_1+nm_2}(z,w)
\end{align*}
Since $\vpi$ is a bijection on $\bbT^2$, $\Phi$ is isometric. Hence $\Phi : \sA_\al \to \sA_\be$ is an isometric isomorphism.
\end{proof}

\begin{example} \label{e:inadmissible} Let $\al > \be > 0$ be irrationals.
Define a homeomorphism $\psi: \Delta_{\be} \to \Delta_{\al} $ as follows $\psi(z, w) = (z', w')$ where $z' = z, \ w' = r^{\al - \be}w,$ where $r = |z|.$
Then $\psi$ is a continuous bijection from one compact Hausdorff space to another, hence a homeomorphism.  Furthermore, it maps the Shilov
boundary of $\Delta_{\al}$ onto the Shilov boundary  of $\Delta_{\be},$ and the part $(0, 0)$ to $(0, 0).$

Observe that $\psi$ is the identity map on $\bbT^2.$ Now the algebras $\sA_{\al},\ \sA_{\be}$ are
determined by their restrictions to $\bbT^2.$ Since $\psi$ is the identity map on $\bbT^2,$ it could only be admissible if the two algebras were equal. Since $\al \neq \be,$ the two
algebras are not the same.  Thus $\psi$ is an inadmissible homeomorphism.
\end{example}

If $\al$ is an irrational, let $G_{\al}$ denote the dense additive subgroup $\{m + \al n: \ m, n \in \bbZ\} \subset \bbR.$
\begin{corollary} \label{c:orderedgroup}
Let $\al,\ \be$ be positive irrationals.  Then the algebras $\sA_{\al},\ \sA_{\be}$ are isometrically isomorphic if and only if the groups $G_{\al},\ G_{\be}$ are
order isomorphic.  That is, if and only if there is a group isomorphism from $G_{\al}$ to $G_{\be}$ which maps positive elements of $G_{\al}$ to positive elements of $G_{\be}.$
\end{corollary}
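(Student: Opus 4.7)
The plan is to use Theorem~\ref{t:iso} as a dictionary between isometric isomorphisms $\sA_\al \to \sA_\be$ and matrices $A \in GL(2,\bbZ)$ satisfying two compatibility conditions, and then match that dictionary against order-preserving group isomorphisms $G_\al \to G_\be$. The bridge in both directions is the positive scalar $\la := m_1 + \be n_1$ appearing in the theorem.

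\emph{Forward direction.} Given an isometric isomorphism $\Phi : \sA_\al \to \sA_\be$, Theorem~\ref{t:iso} supplies a matrix $A = \begin{pmatrix} m_1 & n_1 \\ m_2 & n_2 \end{pmatrix} \in GL(2,\bbZ)$ with $\la := m_1 + \be n_1 > 0$ and $m_2 + \be n_2 = \al\la$. I would define $\psi : G_\al \to \bbR$ by $\psi(x) := \la x$. A short calculation gives
\[ \psi(m + \al n) = m(m_1 + \be n_1) + n(m_2 + \be n_2) = (mm_1 + nm_2) + (mn_1 + nn_2)\be, \]
so $\psi$ takes values in $G_\be$; the integer map $(m,n) \mapsto (mm_1 + nm_2, mn_1 + nn_2)$ is a bijection of $\bbZ^2$ because $A \in GL(2,\bbZ)$, so $\psi$ is a group isomorphism $G_\al \to G_\be$, and positivity preservation is immediate from $\la > 0$.

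\emph{Converse.} Given an order isomorphism $\psi : G_\al \to G_\be$, I would first argue that $\psi$ is multiplication by a positive real. Set $\la := \psi(1)$; since $1 > 0$, one has $\la > 0$. Because $\bbZ \subseteq G_\al$, for every $x \in G_\al$ and every rational $p/q$ with $q \in \bbN$ the element $qx - p$ again lies in $G_\al$, and
\[ x > p/q \iff qx - p > 0 \iff q\psi(x) - p\la > 0 \iff \psi(x)/\la > p/q, \]
so $x$ and $\psi(x)/\la$ determine the same Dedekind cut in $\bbR$, forcing $\psi(x) = \la x$. Writing $\la = m_1 + \be n_1$ and $\la\al = m_2 + \be n_2$ uniquely for integers $m_j, n_j$, the resulting $2 \times 2$ matrix realizes $\psi$ as a $\bbZ$-module isomorphism between the rank-two free modules $G_\al = \bbZ \cdot 1 \oplus \bbZ \cdot \al$ and $G_\be = \bbZ \cdot 1 \oplus \bbZ \cdot \be$, hence $A \in GL(2,\bbZ)$. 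The two algebraic conditions of Theorem~\ref{t:iso} hold by construction, so that theorem produces the desired isometric isomorphism $\sA_\al \to \sA_\be$.

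\emph{Main obstacle.} The only step with any content is the rigidity observation that an order-preserving group isomorphism between dense subgroups of $\bbR$ must be multiplication by a positive real. The Dedekind-cut comparison above dispatches it, with the essential input being $\bbZ \subseteq G_\al$ so that rationals may serve as comparison points even though $\bbQ \cap G_\al = \bbZ$.
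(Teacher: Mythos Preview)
Your argument is correct and follows the same route the paper intends: reduce everything to Theorem~\ref{t:iso} together with the identification $G_\al \cong \bbZ^2 \cong G_\be$. The paper's own proof is a single sentence (``We use the fact that $G_\al$ and $G_\be$ are isomorphic to $\bbZ^2$ and Theorem~\ref{t:iso}''), so you have supplied the details the paper leaves to the reader---in particular the rigidity step showing that an order-preserving group isomorphism $G_\al \to G_\be$ must be multiplication by a positive scalar, which is exactly what is needed to recover the second condition $m_2 + \be n_2 = \al(m_1 + \be n_1)$ of Theorem~\ref{t:iso}. Your Dedekind-cut argument for that step is clean and complete.
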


\begin{proof}
We use the fact that $G_\al$ and $G_\be$ are isomorphic to $\bbZ^2$ and Theorem~\ref{t:iso}.
\end{proof}


\section{Isometric Automorphisms of the $\sA_{\al}$}

Throughout, automorphisms always mean isometric automorphisms. In this section, we will investigate the automorphism group $\Aut(\sA_\al)$ of the uniform algebra $\sA_\al$. By Theorem~\ref{t:iso} from the previous section to the case $\be=\al$, it follows immediately that if $\al$ is not quadratic irrational, then $\Aut(\sA_\al) \cong \bbT^2$. Therefore, we will focus on the case when $\al$ is a positive quadratic irrational number. For such an $\al$, we show that $\Aut(\sA_\al)$ is a semidirect product of $\bbT^2$ and $\bbZ$.\\
\indent First, we would like to introduce some notations that we will use throughout this section. For $f \in C(\bbT^2)$, $(c_1,c_2) \in \bbT^2$, and $A = \left[ \begin{matrix}
a & b \\
c & d
\end{matrix} \right] \in GL(2, \bbZ)$,
let $\pi((c_1,c_2)), \pi(A): C(\bbT^2) \to C(\bbT^2)$ be defined by
\[
\pi((c_1,c_2))(f)(z, w) = f(c_{1}z, c_{2}w),\text{ and}\]
\[\pi(A)(f)= f\circ \vpi,\]
where $\vpi: \bbT^2 \to \bbT^2 \text{ is of the form } (z, w) \mapsto (z^{a}w^{b}, z^{c} w^{d})$. \\
\indent Note that we can view $\pi(\bold{c})$ and $\pi(A)$ as the restrictions of $\pi(\bold{c})$ and $\pi(A)$ to $\sA_\al$ where $\bold{c} \in \bbT^2$ and $A \in GL(2,\bbZ)$.\\
\indent To find the automorphism group of $\sA_\al$, when $\al$ is positive quadratic irrational, we begin by using Theorem~\ref{t:iso} in the case $\be=\al$ to get the following Corollary:

\begin{corollary}\label{c:auto}
Let $A=\begin{bmatrix}
m_1 & n_1 \\
m_2 & n_2
\end{bmatrix} \in GL(2,\bbZ)$.Then
$\pi(A)$ is an automorphism of $\sA_\al$ if and only if the matrix
$A$ satisfies $m_1+\al n_1>0$ and
\begin{equation*}
n_1\al^2+(m_1-n_2)\al-m_2=0.
\end{equation*}
\end{corollary}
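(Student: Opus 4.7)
The plan is to obtain this as an immediate specialization of Theorem~\ref{t:iso} to the case $\be = \al$, with the additional observation that the map $\pi(A)$ as defined corresponds to taking the unimodular scaling constants $c_1 = c_2 = 1$ in that theorem.

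First I would observe that $\pi(A)$ is literally of the form $f \mapsto f \circ \vpi$ with $\vpi(z,w) = (z^{m_1} w^{n_1}, z^{m_2} w^{n_2})$, so it fits into the template of isometric isomorphisms $\sA_\al \to \sA_\al$ described in Theorem~\ref{t:iso}. Applying that theorem with $\be = \al$, we get that $\pi(A)$ is an isometric automorphism of $\sA_\al$ if and only if
\[
m_1 + \al n_1 > 0 \quad \text{and} \quad m_2 + \al n_2 = \al\bigl(m_1 + \al n_1\bigr).
\]
The first of these is exactly the positivity condition in the statement.

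For the second, I would just rearrange algebraically. Expanding the right-hand side as $\al m_1 + \al^2 n_1$ and moving everything to one side gives
\[
n_1 \al^2 + (m_1 - n_2)\al - m_2 = 0,
\]
which is precisely the quadratic relation claimed. The converse direction is the same computation read backwards: given the positivity condition together with the quadratic relation, one recovers $m_2 + \al n_2 = \al(m_1 + \al n_1)$, hence the hypotheses of Theorem~\ref{t:iso} with $\be = \al$ and $c_1 = c_2 = 1$, and so $\pi(A)$ is an isometric automorphism.

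There really is no obstacle here; all the analytic content (factorization of invertibles, Shilov boundary preservation, the constancy of the $\exp(g_j)$ factors) has already been absorbed into Theorem~\ref{t:iso}. The corollary is a one-line specialization plus an elementary rewriting of a linear-in-$\al$ equation as a quadratic in $\al$.
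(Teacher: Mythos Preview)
Your proposal is correct and matches the paper's approach exactly: the paper states this corollary as an immediate consequence of Theorem~\ref{t:iso} specialized to $\be = \al$, and you have simply spelled out the (trivial) algebraic rearrangement of the condition $m_2 + \al n_2 = \al(m_1 + \al n_1)$ into the quadratic form $n_1\al^2 + (m_1 - n_2)\al - m_2 = 0$.
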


\begin{remark} For any matrix  $A=\begin{bmatrix}
m_1 & n_1 \\
m_2 & n_2
\end{bmatrix} \in GL(2,\bbZ)$ that satisfies conditions in Corollary~\ref{c:auto}, $A\begin{bmatrix}
1 \\
\al
\end{bmatrix}=(m_1+\al n_1)\begin{bmatrix}
1 \\
\al
\end{bmatrix}$, i.e., $\begin{bmatrix}
1 \\
\al
\end{bmatrix}$ is an eigenvector of $A$ with positive eigenvalue $m_1+\al n_1$.
\end{remark}

\begin{lemma}\label{l:existsauto}
If $\alpha$ is a positive quadratic irrational number, then there exists a non-identity matrix $A \in SL(2,\bbZ)$ such that $\pi(A)$ is an automorphism of $\sA_{\al}$.
\end{lemma}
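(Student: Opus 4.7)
The plan is to apply Corollary~\ref{c:auto}: an integer matrix $M=\begin{bmatrix} m_1 & n_1 \\ m_2 & n_2 \end{bmatrix}\in GL(2,\bbZ)$ induces an automorphism $\pi(M)$ of $\sA_\al$ iff $m_1+\al n_1>0$ and $n_1\al^2+(m_1-n_2)\al-m_2=0$. Since $\al$ is a positive quadratic irrational, normalize its minimal polynomial over $\bbZ$ to $Ax^2+Bx+C=0$ with $A>0$ and $\gcd(A,B,C)=1$; the discriminant $D=B^2-4AC$ is then a positive integer that is not a perfect square. Irrationality of $\al$ forces the integer polynomial $n_1x^2+(m_1-n_2)x-m_2$ to be an integer scalar multiple, say $k$ times, of the minimal polynomial, so $n_1=kA$, $m_1-n_2=kB$, and $m_2=-kC$. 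Introducing the trace $s=m_1+n_2$ gives $m_1=(s+kB)/2$ and $n_2=(s-kB)/2$.

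A short computation then yields
\[
\det M=m_1n_2-m_2n_1=\frac{s^2-k^2D}{4}.
\]
Requiring $M\in SL(2,\bbZ)$ therefore reduces the problem to finding an integer pair $(s,k)$ with $s^2-Dk^2=4$ and $k\neq 0$ (the latter excluding the scalar matrices $\pm I$), and a sign choice ensuring $m_1+\al n_1>0$; the parity condition $s\equiv kB\pmod 2$ needed for $m_1,n_2\in\bbZ$ is automatic from $s^2-Dk^2=4$ together with $D\equiv B^2\pmod 4$.

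Since $D>0$ is not a perfect square, the classical Pell equation $x^2-Dy^2=1$ has a non-trivial solution with $x_0,y_0>0$; setting $(s,k)=(2x_0,2y_0)$ solves $s^2-Dk^2=4$ with $s,k>0$. For this choice $M$ has integer entries, $\det M=1$, and $n_1=kA\neq 0$, so $M\in SL(2,\bbZ)$ is not the identity. Writing $2A\al=-B+\sqrt{D}$ (the positive root, since $\al>0$ and $A>0$), a direct computation gives $m_1+\al n_1=x_0+y_0\sqrt{D}>0$, which completes the verification via Corollary~\ref{c:auto}. The main obstacle is spotting that the automorphism condition, combined with $\det M=1$, translates precisely into Pell's equation $s^2-Dk^2=4$ for the discriminant of $\al$'s minimal polynomial; once this reduction is in hand, classical solvability of Pell and routine bookkeeping finish the proof.
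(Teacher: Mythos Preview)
Your argument is correct and essentially self-contained, whereas the paper's proof is a one-line citation to Lemma~8.12 of \cite{Kar13}. Your reduction of the eigenvector condition of Corollary~\ref{c:auto} together with $\det M=1$ to the Pell-type equation $s^{2}-Dk^{2}=4$ for the discriminant $D$ of the minimal polynomial is exactly the mechanism that the paper later develops at length in Section~5 (Propositions~\ref{p:pq}--\ref{p:seven}) to compute the full automorphism group; you are anticipating that computation but only extracting the existence statement needed here. This has the advantage of making the lemma independent of the external reference and of motivating the later section.

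One small point: your parenthetical ``the positive root, since $\al>0$ and $A>0$'' does not actually pin down which root $\al$ is, since both roots of $Ax^{2}+Bx+C$ can be positive. However, the conclusion $m_{1}+\al n_{1}>0$ survives either way: if $2A\al=-B-\sqrt{D}$ instead, then $m_{1}+\al n_{1}=x_{0}-y_{0}\sqrt{D}$, and $x_{0}^{2}-Dy_{0}^{2}=1$ with $x_{0},y_{0}>0$ forces $x_{0}>y_{0}\sqrt{D}$, so this is still positive. Alternatively you could simply note that replacing $k$ by $-k$ flips the sign of $n_{1}$ without affecting $\det M$, so one of the two sign choices works.
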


\begin{proof}
This follows from Lemma 8.12 of \cite{Kar13}.
\end{proof}

\begin{lemma}\label{l:commute}
Let $A_1,A_2$ be matrices in $GL(2, \bbZ)$ and $\al$ a positive quadratic irrational number. If $\pi(A_1)$ and $\pi(A_2)$ are automorphisms of $\sA_\al$, then $A_1A_2=A_2A_1$.
\end{lemma}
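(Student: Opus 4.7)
The strategy is to produce a common eigenbasis for $A_1$ and $A_2$ over $\bbR$ and then conclude commutativity by simultaneous diagonalization. Write $A_i = \begin{bmatrix} a_i & b_i \\ c_i & d_i \end{bmatrix}$ for $i=1,2$. The Remark following Corollary~\ref{c:auto} already supplies one common eigenvector, namely $v := \begin{bmatrix} 1 \\ \al \end{bmatrix}$, with $A_i v = (a_i + b_i \al) v$. Since two matrices sharing only a single eigenvector need not commute in general, the heart of the argument is to produce a second common eigenvector, and this is exactly where the hypothesis that $\al$ is a \emph{quadratic} irrational is used.

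By Corollary~\ref{c:auto}, $\al$ is a root of $p_i(x) := b_i x^2 + (a_i - d_i) x - c_i$. Since $\al$ is irrational we must have $b_i \ne 0$, so $p_i$ has degree $2$ and is therefore a nonzero rational multiple of the minimal polynomial of $\al$ over $\bbQ$. Consequently the Galois conjugate $\al'$ of $\al$ is also a root of $p_i$, for both $i=1,2$. Reversing the derivation of Corollary~\ref{c:auto} (equivalently, checking via Vieta's that $(a_i + b_i \al) + (a_i + b_i \al') = a_i + d_i = \operatorname{tr}(A_i)$) then shows that $v' := \begin{bmatrix} 1 \\ \al' \end{bmatrix}$ is an eigenvector of $A_i$ with eigenvalue $a_i + b_i \al'$. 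Because $\al$ is irrational, $\al \ne \al'$, so $\{v, v'\}$ is a basis of $\bbR^2$.

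Setting $P := \bigl[\, v \ \ v' \,\bigr] \in GL(2, \bbR)$, the matrices $P^{-1} A_1 P$ and $P^{-1} A_2 P$ are both diagonal; diagonal matrices commute, hence $A_1 A_2 = A_2 A_1$. The main (really the only) obstacle is the construction of $v'$: once one recognizes that the quadratic in Corollary~\ref{c:auto} is determined, up to rational scalar, by $\al$ alone and not by the particular matrix, the Galois conjugate of $\al$ automatically provides a second eigendirection common to \emph{every} admissible matrix, and the remainder of the proof reduces to elementary linear algebra.
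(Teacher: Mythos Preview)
Your argument is correct, with one cosmetic caveat: the assertion ``since $\al$ is irrational we must have $b_i \ne 0$'' is not literally true, because $b_i = 0$ is compatible with Corollary~\ref{c:auto} --- but only when $A_i = I$ (from $b_i = 0$ the relation forces $a_i = d_i$ and $c_i = 0$, and then $\det A_i = \pm 1$ together with $a_i > 0$ give $A_i = I$). That case is trivial, so simply dispose of it at the outset and assume $A_1, A_2 \ne I$.

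The paper takes a different and somewhat shorter route. Rather than producing a second common eigenvector, it observes that the set $\sH$ of all $A \in GL(2,\bbZ)$ having $v = \begin{bmatrix} 1 \\ \al \end{bmatrix}$ as an eigenvector with positive eigenvalue is a subgroup of $GL(2,\bbZ)$, and that the map $\phi : \sH \to \bbR^+$ sending $A$ to its eigenvalue on $v$ is an injective group homomorphism (injectivity amounts to the fact that a matrix in $GL(2,\bbZ)$ fixing $v$ must be the identity, which follows from irrationality of $\al$). Hence $\sH$ embeds in the abelian group $\bbR^+$ and is therefore abelian; since $A_1, A_2 \in \sH$, they commute.

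The paper's argument is more conceptual and, notably, never invokes the quadratic hypothesis on $\al$ (it is valid for any irrational $\al$, though of course $\sH = \{I\}$ unless $\al$ is quadratic). Your approach is more explicit: it exhibits the simultaneous diagonalizing basis $\{v, v'\}$ directly via the Galois conjugate. This pays a dividend downstream, since exactly this simultaneous diagonalization is used in the proofs of Lemmas~\ref{l:nosquareroot} and~\ref{l:singlegenerator}, so your viewpoint dovetails naturally with what follows.
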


\begin{proof} Let  $A_1,A_2 \in GL(2, \bbZ)$ be such that $\pi(A_1)$ and $\pi(A_2)$ are automorphisms of $\sA_\al$. Let $\sH=\{A \in GL(2,\bbZ) : A\begin{bmatrix}
1 \\
\al
\end{bmatrix}=\lambda\begin{bmatrix}
1 \\
\al
\end{bmatrix},\text{ for some } \lambda>0\}$. Then $\sH$ is a subgroup of $GL(2,\bbZ)$. Let $\phi : \sH \to \bbR^+$ be defined by $\phi(A)=\lambda$. Then $\ker{\phi}=\{I\}$ Thus $\sH$ is isomorphic to a subgroup of $\bbR^+$. Hence $\sH$ is commutative. Since $A_1,A_2 \in \sH$, $A_1A_2=A_2A_1$.
\end{proof}

\begin{lemma}\label{l:nosquareroot}
Let $\alpha$ be a positive irrational number. Let
\begin{align*}
\sH=\{A \in SL(2,\bbZ) : A\begin{bmatrix}
1 \\
\al
\end{bmatrix}=\lambda\begin{bmatrix}
1 \\
\al
\end{bmatrix},\text{ for some } \lambda>0\}.
\end{align*}
If $A \in \sH$ with distinct eigenvalues and $n$ is an even number, then $A^n$ has no square root in $GL(2,\bbZ)\setminus SL(2,\bbZ)$.
\end{lemma}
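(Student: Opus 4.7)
The plan is to assume a counterexample $B\in GL(2,\bbZ)\setminus SL(2,\bbZ)$ (so $\det B=-1$) with $B^2=A^n$ and derive a contradiction by computing $\operatorname{tr}(B)$. First I would record the spectrum of $A$: let $\la$ denote the eigenvalue of $A$ paired with the eigenvector $\begin{bmatrix}1\\\al\end{bmatrix}$. Because $\det A=1$ the other eigenvalue is $\la^{-1}$, and the hypotheses that the eigenvalues are distinct and $\la>0$ force $\la\neq 1$. Hence $t:=\operatorname{tr}(A)=\la+\la^{-1}$ is an integer with $t>2$, and $\la$ is a positive quadratic irrational satisfying $x^2-tx+1=0$.

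Since $\la$ is not a root of unity, $A^n$ has two distinct real eigenvalues $\la^n,\la^{-n}$ for every nonzero $n$. Now $B$ commutes with $B^2=A^n$, so $B$ preserves each (one-dimensional) eigenspace of $A^n$, and these coincide with the eigenspaces of $A$. Thus $B$ is diagonal in the eigenbasis of $A$, with eigenvalues $\mu_1,\mu_2$ satisfying $\{\mu_1^2,\mu_2^2\}=\{\la^n,\la^{-n}\}$ and $\mu_1\mu_2=\det B=-1$. These constraints force $\{\mu_1,\mu_2\}=\{\pm\la^{n/2},\mp\la^{-n/2}\}$, so
\[
\operatorname{tr}(B)=\mu_1+\mu_2=\pm\bigl(\la^{n/2}-\la^{-n/2}\bigr).
\]
Since $B\in GL(2,\bbZ)$, this quantity must lie in $\bbZ$, and the problem reduces to showing $\la^{n/2}-\la^{-n/2}\notin\bbZ$.

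The remaining step is the core arithmetic obstruction, and I expect it to be the only non-routine part of the argument. With $k:=|n|/2\geq 1$, set $r_k=\la^k+\la^{-k}$ and $s_k=\la^k-\la^{-k}$. The Chebyshev-type recurrence $r_{k+1}=tr_k-r_{k-1}$, $r_0=2$, $r_1=t$, shows $r_k\in\bbZ$, while the identity $s_k^2=r_k^2-4$ implies that if $s_k$ were an integer then $(r_k-|s_k|)(r_k+|s_k|)=4$ would be a factorization of $4$ into two positive integers. The only options are $(1,4)$, giving the non-integer $r_k=5/2$, and $(2,2)$, forcing $|s_k|=0$ and hence $\la=1$; both possibilities contradict the standing hypotheses, which closes the argument. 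Note that this Pell-style calculation in $\bbQ(\la)$ is exactly where the parity hypothesis on $n$ enters: it guarantees that $k=|n|/2$ is an integer so that the integer recurrence for $r_k$ applies.
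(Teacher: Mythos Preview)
Your proof is correct, but it follows a genuinely different route from the paper's. The paper diagonalizes $A=P\,\mathrm{diag}(\lambda,\lambda^{-1})P^{-1}$ with first column of $P$ equal to $\begin{bmatrix}1\\\alpha\end{bmatrix}$, observes (as you do) that any determinant $-1$ square root of $A^n=A^{2k}$ must be $\pm P\,\mathrm{diag}(\lambda^k,-\lambda^{-k})P^{-1}$, and then multiplies by $A^{-k}$ to obtain $C:=P\,\mathrm{diag}(1,-1)P^{-1}\in GL(2,\bbZ)$. The contradiction is then immediate from the irrationality of $\alpha$: any integer matrix fixing $\begin{bmatrix}1\\\alpha\end{bmatrix}$ must be the identity, yet $\det C=-1$. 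Your argument instead stays with $B$ and attacks $\operatorname{tr}(B)=\pm(\lambda^{k}-\lambda^{-k})$, using the Chebyshev recurrence for $r_k=\lambda^k+\lambda^{-k}\in\bbZ$ together with the Pell-type identity $r_k^2-s_k^2=4$ to rule out $s_k\in\bbZ$. The paper's reduction is shorter and exploits the eigen\emph{vector} (the irrational $\alpha$) rather than the eigen\emph{value}; your approach is self-contained arithmetic in $\bbZ[\lambda]$ and, pleasantly, anticipates the Pell-equation machinery of Section~5. One small remark: both arguments tacitly need $n\neq 0$ (so that $A^n$ has distinct eigenvalues and $k\geq 1$); you signal this by writing $k=|n|/2\geq 1$, but it is worth noting that the lemma as stated is actually false for $n=0$.
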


\begin{proof}
Let $n$ be even and $\lambda$ an eigenvalue of $A$ corresponding to an eigenvector $\begin{bmatrix}
1 \\
\al
\end{bmatrix}$. Then $\frac{1}{\lambda}$ is the other eigenvalue of $A$. We write $A=P\begin{bmatrix}
\lambda & 0 \\
0 & \frac{1}{\lambda}
\end{bmatrix}P^{-1}$, where $P \in GL(2,\bbR)$ and the first column of $P$ is $\begin{bmatrix}
1 \\
\al
\end{bmatrix}$. Since $n$ is even, $n=2k$ for some $k \in \bbZ$. Assume that $A^n$ has a square root in $GL(2,\bbZ)\setminus SL(2,\bbZ)$. Then $P\begin{bmatrix}
\lambda^k & 0 \\
0 & -\frac{1}{\lambda^k}
\end{bmatrix}P^{-1} \in GL(2,\bbZ)$. Thus
\begin{align*}
P\begin{bmatrix}
1 & 0 \\
0 & -1
\end{bmatrix}P^{-1}=P\begin{bmatrix}
\lambda^k & 0 \\
0 & -\frac{1}{\lambda^k}
\end{bmatrix}P^{-1}A^{-k} \in GL(2,\bbZ).
\end{align*}
Note that the only matrix in $GL(2,\bbZ)$ that has an eigenvector $\begin{bmatrix}
1 \\
\al
\end{bmatrix}$ with eigenvalue $1$ is the identity matrix.
Since $P\begin{bmatrix}
1 & 0 \\
0 & -1
\end{bmatrix}P^{-1} \in GL(2,\bbZ)$ has an eigenvector $\begin{bmatrix}
1 \\
\al
\end{bmatrix}$ with eigenvalue $1$, $P\begin{bmatrix}
1 & 0 \\
0 & -1
\end{bmatrix}P^{-1}$ must be the identity matrix. This is a contradiction since $\operatorname{det}(P\begin{bmatrix}
1 & 0 \\
0 & -1
\end{bmatrix}P^{-1})=-1$. Hence  $A^n$ has no square root in $GL(2,\bbZ)\setminus SL(2,\bbZ)$.
\end{proof}

\begin{lemma}\label{l:dependent}
Let $A,B$ be matrices in $\mathfrak{sl}(2,\bbR)$. If $B \neq 0$ and $[A, B] = 0 $, then $\exists \, k \in \bbR$ such that $A=kB$.
\end{lemma}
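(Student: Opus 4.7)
The plan is to prove this by a direct coordinate computation, exploiting the trace-zero condition to reduce to a linear algebra fact about $2 \times 2$ minors. Writing
\[
A = \begin{pmatrix} a & b \\ c & -a \end{pmatrix}, \qquad
B = \begin{pmatrix} d & e \\ f & -d \end{pmatrix},
\]
we compute $[A, B] = AB - BA$ entry by entry. The diagonal entries turn out to be $\pm (bf - ce)$, while the off-diagonal entries are $\pm 2(ae - bd)$ and $\pm 2(cd - af)$. Thus $[A, B] = 0$ is equivalent to the three equations
\[
ae - bd = 0, \qquad af - cd = 0, \qquad bf - ce = 0.
\]

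Next I would observe that these three equations are precisely the vanishing of the three $2 \times 2$ minors of the matrix
\[
M = \begin{pmatrix} a & b & c \\ d & e & f \end{pmatrix}.
\]
Hence $[A,B] = 0$ is equivalent to $\operatorname{rank}(M) \leq 1$, that is, to the row vectors $(a,b,c)$ and $(d,e,f)$ being linearly dependent over $\bbR$. Since $B \neq 0$, the vector $(d,e,f)$ is nonzero, so the linear dependence forces $(a,b,c) = k\,(d,e,f)$ for some scalar $k \in \bbR$, which is exactly $A = kB$.

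There is no real obstacle here beyond bookkeeping the entries of $AB$ and $BA$ correctly; the key conceptual point is simply recognizing the commutator equations as minor conditions. An alternative route, if one prefers a more conceptual argument, would be case analysis: if $B$ has nonzero determinant, diagonalize $B$ over $\bbC$ with eigenvalues $\pm \lambda$, note that anything commuting with a matrix of distinct eigenvalues is diagonal in the same basis, and apply the trace-zero condition on $A$; if $\det B = 0$, then $B$ is a nonzero nilpotent in $\mathfrak{sl}(2,\bbR)$, conjugate to $\bigl(\begin{smallmatrix} 0 & 1 \\ 0 & 0 \end{smallmatrix}\bigr)$, and the centralizer of this matrix intersected with $\mathfrak{sl}(2,\bbR)$ is the line of strictly upper triangular trace-zero matrices. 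Both routes lead to the same conclusion, but the minor computation is the shortest.
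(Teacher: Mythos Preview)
Your proof is correct; the commutator computation and the identification with the $2\times 2$ minors of $M$ are both accurate, and the rank argument finishes the job cleanly. The paper itself gives no argument beyond the single word ``Routine,'' so your computation simply fills in details the authors chose to omit, and there is nothing to compare against.
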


\begin{proof}
Routine
\end{proof}

\begin{lemma}\label{l:singlegenerator}
If $\al$ is a positive quadratic irrational number, then there is a non-identity matrix $A_0 \in GL(2,\bbZ)$ such that for any matrix $A$ with $\pi(A)\in \Aut(\sA_\al)$, $\pi(A)$ is of the form $(\pi(A_0))^n$, for some $n \in \bbZ$.
\end{lemma}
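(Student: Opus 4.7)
The plan is to study the group
\[ \sH=\left\{A \in GL(2,\bbZ) : A\begin{bmatrix} 1 \\ \al \end{bmatrix}=\la\begin{bmatrix} 1 \\ \al \end{bmatrix} \text{ for some } \la>0\right\} \]
from the proof of Lemma~\ref{l:commute} and to show it is infinite cyclic. By Corollary~\ref{c:auto}, $\sH$ is precisely the set of matrices $A$ with $\pi(A)\in\Aut(\sA_\al)$, so any generator $A_0$ will satisfy the conclusion. Recall from the proof of Lemma~\ref{l:commute} that $\phi:\sH\to(\bbR^+,\cdot)$, $\phi(A)=\la$, is an injective group homomorphism, and that $\sH$ is nontrivial by Lemma~\ref{l:existsauto}.

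The first step is to prove that $\sH_+:=\sH\cap SL(2,\bbZ)$ is infinite cyclic. For $A=\begin{bmatrix} m_1 & n_1 \\ m_2 & n_2\end{bmatrix}\in\sH_+$, one eigenvalue is $\la=m_1+n_1\al$; since $\det A=1$, the other is $1/\la$. Because $\al$ is a quadratic irrational with Galois conjugate $\al'\in\bbQ(\al)$ and the characteristic polynomial of $A$ has rational coefficients, the second eigenvalue equals $m_1+n_1\al'$, giving the identity $1/\la=m_1+n_1\al'$. Consequently, whenever $\la$ lies in a bounded interval of $\bbR^+$ away from $0$, the linear system in $(m_1,n_1)$ has bounded real solutions, and hence only finitely many integer solutions. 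Thus $\phi(\sH_+)$ is a nontrivial discrete subgroup of $(\bbR^+,\cdot)$, and therefore infinite cyclic. Let $A_1$ be a generator.

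The second step uses the determinant character together with Lemma~\ref{l:nosquareroot} to lift cyclicity from $\sH_+$ to $\sH$. If $\sH=\sH_+$, set $A_0=A_1$. Otherwise choose $B\in\sH$ with $\det B=-1$; then $B^2\in\sH_+=\langle A_1\rangle$, so $B^2=A_1^k$ for some $k\in\bbZ$. Note $-I\notin\sH$ because its eigenvalue on $\begin{bmatrix}1 \\ \al\end{bmatrix}$ is $-1<0$, so $A_1\ne\pm I$ and $A_1$ has distinct eigenvalues. Lemma~\ref{l:nosquareroot} applied to $A_1$ then forces $k$ to be odd, say $k=2m+1$, for otherwise $A_1^k$ would have no square root in $GL(2,\bbZ)\setminus SL(2,\bbZ)$. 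Setting $A_0:=BA_1^{-m}$ yields $A_0^2=A_1$ and $B=A_0^{2m+1}$, so $\sH=\langle A_1,B\rangle=\langle A_0\rangle$.

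The main obstacle I expect is the discreteness step, which relies crucially on $\al$ being quadratic irrational: the Galois-conjugate identity $1/\la=m_1+n_1\al'$ supplies the second linear equation in $(m_1,n_1)$ needed to bound the eigenvalues away from $1$. Without it the eigenvalues could cluster and $\sH_+$ would fail to be cyclic. Once that is in hand, the remaining work is routine bookkeeping using the commutativity of $\sH$ (Lemma~\ref{l:commute}), the determinant character, and the structural input of Lemma~\ref{l:nosquareroot}.
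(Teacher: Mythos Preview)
Your proof is correct, and your handling of the determinant $-1$ case via Lemma~\ref{l:nosquareroot} matches the paper's. The route to cyclicity of $\sH_+$, however, is genuinely different.

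The paper fixes a non-identity $B\in\sH_+$ (from Lemma~\ref{l:existsauto}), writes $B=\exp(S_1)$ with $S_1\in\mathfrak{sl}(2,\bbR)$, and shows that $\sC=\{t\in\bbR:\exp(tS_1)\in\sH_+\}$ is a closed proper subgroup of $\bbR$, hence cyclic with generator $t_0$. To see that every $A\in\sH_+$ lies on this one-parameter subgroup, the paper writes $A=\exp(S_2)$, uses Lemma~\ref{l:commute} to get $AB=BA$, deduces $S_1S_2=S_2S_1$, and then invokes Lemma~\ref{l:dependent} to conclude $S_2=kS_1$. Thus the paper's argument rests on the exponential map and the $\mathfrak{sl}(2,\bbR)$ fact recorded as Lemma~\ref{l:dependent}.

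Your argument bypasses all of this: you prove discreteness of $\phi(\sH_+)$ directly from the Galois-conjugate identity $1/\la=m_1+n_1\al'$, which together with $\la=m_1+n_1\al$ gives an invertible linear system in $(m_1,n_1)$ (since $\al\neq\al'$). This is more elementary---no Lie theory, and Lemma~\ref{l:dependent} is never needed---and it makes transparent exactly where the quadratic hypothesis on $\al$ enters: it is what supplies the second equation. The paper's approach, by contrast, packages the discreteness into the closedness of $\sC$ (which ultimately comes from $SL(2,\bbZ)$ being discrete in $SL(2,\bbR)$) and uses the quadratic hypothesis only indirectly through Lemma~\ref{l:existsauto}.
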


\begin{proof}
By Lemma~\ref{l:existsauto}, there is a non-identity matrix $B\in SL(2,\bbZ)$ such that $\pi(B)$ is an automorphism of $\sA_\al$. Since $B \in SL(2,\bbR)$, there exist a non-zero matrix $S_1 \in \mathfrak{sl}(2,\bbR)$ such that $B=\exp(S_1)$. Let $\sH=\{A \in SL(2,\bbZ) : A\begin{bmatrix}
1 \\
\al
\end{bmatrix}=\lambda\begin{bmatrix}
1 \\
\al
\end{bmatrix},\text{ for some } \lambda>0\}$. Then $B \in \sH$.  Let $\sC=\{t \in \bbR : \exp(tS_1) \in \sH\}$. Then $\sC$ is a closed additive subgroup of $\bbR$. Let $t_0=\inf\{ t>0 : \exp(tS_1) \in \sH \}$. Then $t_0$ generates the group $\sC$. Let $A'=\exp(t_0S_1)$. Note that $A'$ cannot have a square root in $SL(2,\bbZ)$. If $A'$ has no square root in $GL(2, \bbZ)$, then let $A_0=A'$. If $A'$ has a square root in $GL(2,\bbZ)$, then let $A_0$ be the square root that has positive eigenvalue with respect to the eigenvector $\begin{bmatrix}
1 \\
\al
\end{bmatrix}$.\\
\indent Let $A \in GL(2,\bbZ)$ be such that $\pi(A) \in \Aut(\sA_\al)$. If $A \in SL(2,\bbZ)$, then there exists $S_2 \in \mathfrak{sl}(2,\bbR)$ such that $A=\exp(S_2)$. By Lemma~\ref{l:commute}, $BA=AB$. Thus $S_1S_2=S_2S_1$. By Lemma~\ref{l:dependent},  $\exists \, k \in \bbR$ so that $S_2=kS_1$. Since $A \in \sH$, $k \in \sC$. Then $\exists \, j \in \bbZ$ such that $k=jt_0$ Hence $A=(A')^{j}$. Thus $A=A_0^n$ for some $n \in \bbZ$. If $A \not\in SL(2,\bbZ)$, then $A^2 \in SL(2,\bbZ)$. By similar argument, we have $\exists \, n \in \bbZ$ such that $A^2=(A')^{n}$. By Lemma~\ref{l:nosquareroot}, $n$ must be odd.\\
\indent Claim: $A'$ has a square root in $GL(2,\bbZ)$.\\
Let $\lambda$ and $\lambda'$ be eigenvalues of $A$ and $A'$ corresponding to an eigenvector $\begin{bmatrix}
1 \\
\al
\end{bmatrix}$. By Lemma~\ref{l:commute}, $AA'=A'A$. Since both $A$ and $A'$ are diagonalizable and they commute, $A$ and $A'$ are simultaneously diagonalizable. Then we can write $A=P\begin{bmatrix}
\lambda & 0 \\
0 & -\frac{1}{\lambda}
\end{bmatrix}P^{-1}$ and $A'=P\begin{bmatrix}
\lambda' & 0 \\
0 & \frac{1}{\lambda'}
\end{bmatrix}P^{-1}$, where $P \in GL(2,\bbR)$  and the first column of $P$ is $\begin{bmatrix}
1 \\
\al
\end{bmatrix}$. Since $A^2=(A')^n$, we have $\lambda^2=(\lambda')^n$. Since $n$ is odd, $n=2k+1$ for some $k \in \bbZ$. Thus $\lambda=(\lambda')^{k+\frac{1}{2}}$. Hence
\begin{align*}
P\begin{bmatrix}
\sqrt{\lambda'} & 0 \\
0 & -\frac{1}{\sqrt{\lambda'}}
\end{bmatrix}P^{-1}&=P\begin{bmatrix}
(\lambda')^{-k}\lambda & 0 \\
0 & -\frac{1}{(\lambda')^{-k}\lambda}
\end{bmatrix}P^{-1}\\
&=P\begin{bmatrix}
\frac{1}{(\lambda')^k} & 0 \\
0 & (\lambda')^k
\end{bmatrix}P^{-1}P\begin{bmatrix}
\lambda & 0 \\
0 & -\frac{1}{\lambda}
\end{bmatrix}P^{-1}\\
&=(A')^{-k}A \in GL(2,\bbZ).
\end{align*}
In this case, $A_0$ is  the square root of $A'$ that has positive eigenvalue with respect to the eigenvector $\begin{bmatrix}
1 \\
\al
\end{bmatrix}$. Thus $A=A_0^{n}$. Hence  $\pi(A)=(\pi(A_0))^n$.
\end{proof}

Now, if $\al$ is positive quadratic irrational, we know from Theorem~\ref{t:iso} and  Lemma~\ref{l:singlegenerator} that $\Aut(\sA_\al)$ is the set $\{\pi(\bold{c})\pi(A)^k : \bold{c} \in \bbT^2, k \in \bbZ\}$ for some non-identity matrix $A \in GL(2, \bbZ)$.\\
\indent For $A=\begin{bmatrix}
a & b \\
c & d
\end{bmatrix} \in GL(2,\bbZ)$, let $\psi_A:\bbT^2 \to \bbT^2$ be defined by $\psi_A((c_1,c_2))=(c_1^ac_2^b,c_1^cc_2^d)$, for $(c_1,c_2) \in \bbT^2$. Note that $\Aut(\bbT^2)=\{\psi_A : A \in GL(2,\bbZ)\}$. For each $A \in GL(2,\bbZ)$, let $\bbT^2 \rtimes_{\psi_A} \bbZ$ denote the semidirect product of $\bbT^2$ and $\bbZ$, where the group multiplication of $\bbT^2 \rtimes_{\psi_A} \bbZ$ is given by $(\bold{c},m).(\bold{d},n)=(\bold{c}\psi_A^m(\bold{d}),m+n)$.

\begin{theorem} \label{t:semidirect}
Let $\al$ be a positive irrational.  If $\al$ is not a quadratic irrational, then $Aut(\sA_{\al}) \cong \bbT^2.$
If $\al$ is positive quadratic irrational, then $\Aut(\sA_\al) \cong \bbT^2 \rtimes_{\psi_A} \bbZ$ for some non-identity matrix $A \in GL(2,\bbZ)$.
\end{theorem}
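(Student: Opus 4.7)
The plan is to derive both claims from Theorem~\ref{t:iso} applied with $\be=\al$ together with Lemma~\ref{l:singlegenerator}. Throughout, every element of $\Aut(\sA_\al)$ factors as $\pi(\bold{c})\pi(A)$ where $\bold{c}\in\bbT^2$ and $A=\begin{bmatrix} m_1 & n_1 \\ m_2 & n_2 \end{bmatrix}\in GL(2,\bbZ)$ satisfies $m_1+\al n_1>0$ and, by Corollary~\ref{c:auto}, the eigenvalue relation $n_1\al^2+(m_1-n_2)\al-m_2=0$.

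For the first assertion, if $\al$ is not a quadratic irrational then $1,\al,\al^2$ are $\bbQ$-linearly independent, so the relation forces $n_1=m_2=0$ and $m_1=n_2$. Combined with $|\det A|=1$ and $m_1>0$ this pins down $A=I$. Thus every automorphism has the form $\pi(\bold{c})$, and $\bold{c}\mapsto\pi(\bold{c})$ is a group isomorphism $\bbT^2\to\Aut(\sA_\al)$; injectivity is seen by evaluating on the coordinate characters $\chi_{1,0},\chi_{0,1}$.

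For the quadratic case, let $A_0$ be the non-identity matrix furnished by Lemma~\ref{l:singlegenerator}. Combining that lemma with Theorem~\ref{t:iso} shows every element of $\Aut(\sA_\al)$ has the form $\pi(\bold{c})\pi(A_0)^k$ for some $\bold{c}\in\bbT^2$, $k\in\bbZ$. A direct change-of-variable computation yields the commutation relation
\[ \pi(A_0)\,\pi(\bold{d})\,\pi(A_0)^{-1}=\pi\bigl(\psi_{A_0^{-1}}(\bold{d})\bigr),\qquad \bold{d}\in\bbT^2, \]
since $\pi(A_0)$ acts by pullback along $\psi_{A_0}$ and $\psi_{A_0}$ intertwines translation by $\bold{e}$ on the left with translation by $\psi_{A_0}(\bold{e})$ on the right. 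Setting $A:=A_0^{-1}$ and iterating (using $\psi_A^k=\psi_{A^k}$), the map $\Psi:\bbT^2\rtimes_{\psi_A}\bbZ\to\Aut(\sA_\al)$ defined by $(\bold{c},k)\mapsto\pi(\bold{c})\pi(A_0)^k$ is a group homomorphism: the semidirect-product law matches because $\pi(A_0)^m\pi(\bold{d})\pi(A_0)^{-m}=\pi(\psi_A^m(\bold{d}))$.

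Surjectivity of $\Psi$ is the factorization above. For injectivity, suppose $\pi(\bold{c})\pi(A_0)^k=\id$; evaluating on $\chi_{1,0},\chi_{0,1}$ reduces this to $A_0^k=I$ and $\bold{c}=(1,1)$. If $k\neq 0$, the positive eigenvalue $\lambda$ of $A_0$ on the eigenvector $\begin{bmatrix}1\\\al\end{bmatrix}$ would give $\lambda^k=1$, forcing $\lambda=1$; but then the nonzero rational matrix $A_0-I$ would have a one-dimensional rational kernel containing the irrational line $\bbR\cdot\begin{bmatrix}1\\\al\end{bmatrix}$, which is impossible. Hence $k=0$ and $\bold{c}=(1,1)$, so $\Psi$ is an isomorphism. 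The main obstacle is getting the exponent sign right in the commutation relation: conjugation by $\pi(A_0)$ acts on $\bbT^2$ via $\psi_{A_0^{-1}}$ rather than $\psi_{A_0}$, so the matrix featuring in the semidirect product is the inverse of the generator produced by Lemma~\ref{l:singlegenerator}; overlooking this inversion would create a genuine mismatch in the group law.
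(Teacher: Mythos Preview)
Your proof is correct and follows essentially the same route as the paper: both derive the factorization $\pi(\bold{c})\pi(A_0)^k$ from Theorem~\ref{t:iso} and Lemma~\ref{l:singlegenerator}, then verify the semidirect-product structure via the commutation relation between $\pi(A_0)$ and the torus part. Your treatment is actually more careful on two points the paper leaves implicit: you correctly track that conjugation by $\pi(A_0)$ acts on $\bbT^2$ via $\psi_{A_0^{-1}}$ (and adjust by setting $A=A_0^{-1}$ so the semidirect-product law matches), and you give an explicit injectivity argument ruling out $A_0^k=I$ for $k\neq 0$, whereas the paper simply asserts $N_\al\cap\langle\pi(A)\rangle=\{1\}$.
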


\begin{proof}
If $\al$ is not a quadratic irrational, then from Theorem~\ref{t:iso} and Corollary~\ref{c:auto}, $Aut(\sA_{\al}) \cong \bbT^2.$

Assume $\al$ is positive quadratic irrational.
Then $\Aut(\sA_\al)=\{\pi(\bold{c})\pi(A)^k : \bold{c} \in \bbT^2, k \in \bbZ\}$ for some non-identity matrix
$A \in GL(2, \bbZ)$. Let $N_\al$ denote the subgroup generated by $\{\pi(\bold{c}) : \bold{c} \in \bbT^2\}$ and
 $<\pi(A)>$ denote the subgroup generated by $\pi(A)$.
First, we will show that $N_\al \, \unlhd \, \Aut (\sA_\al)$. Let $k\in\bbZ$ and $(c_1,c_2)\in \bbT^2$. Let $f \in A_\alpha$. Let $A^{k}=\begin{bmatrix}
a & b \\
c & d
\end{bmatrix}$ and $D = \det(A^k).$ Then we compute
\begin{align*}
\pi(A)^k\pi((c_1,c_2))\pi(A)^{-k}(f)(z,w)&=\pi((c_1,c_2))\pi(A)^{-k}(f)(z^aw^b,z^cw^d)\\
&=\pi(A)^{-k}(f)(c_1z^aw^b,c_2z^cw^d)\\
&=\pi((c_1^{\frac{d}{D}}c_2^{-\frac{b}{D}},c_1^{-\frac{c}{D}}c_2^{\frac{a}{D}}))(f)(z,w).
\end{align*}
Thus $\pi(A)^k\pi((c_1,c_2))\pi(A)^{-k} \in N_\al$. Hence $N_\al \unlhd \, \Aut (\sA_\al)$. Now we have that $\Aut (\sA_\al)=N_\al <\pi(A)>$ and $N_\al\cap <\pi(A)>=\{1\}$. Note that
\begin{align*}
\pi(\bold{c})\pi(A)^{k_1}\pi(\bold{d})\pi(A)^{k_2}&=\pi(\bold{c})(\pi(A)^{k_1}\pi(\bold{d})\pi(A)^{-k_1})\pi(A)^{k_1+k_2}\\
&=\pi(\bold{c})\pi(\psi_A^{-k_1}(\bold{d}))\pi(A)^{k_1+k_2},
\end{align*}
for $\bold{c},\bold{d}\in \bbT^2,k_1,k_2 \in \bbZ$. Thus $\Aut(\sA_\al) \cong \bbT^2 \rtimes_{\psi_A} \bbZ$.
\end{proof}

\begin{theorem}\label{t:autiso}
Let $\al, \beta$ be positive quadratic irrationals. If $\Aut (\sA_\al)\cong \bbT^2 \rtimes_{\psi_A} \bbZ$ and $\Aut (\sA_\be)\cong \bbT^2 \rtimes_{\psi_B} \bbZ$, then
$\Aut (\sA_\al)\cong \Aut (\sA_\be)$ if and only if $B=C^{-1}AC$ or $B^{-1}=C^{-1}AC$ for some $C\in GL(2,\bbZ)$.
\end{theorem}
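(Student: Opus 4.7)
The plan is to split the biconditional into sufficiency and necessity, both handled abstractly at the level of semidirect products.

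For the \emph{sufficiency} direction, assume $B=C^{-1}AC$ for some $C\in GL(2,\bbZ)$. I will define $\Phi:\bbT^2\rtimes_{\psi_A}\bbZ\to\bbT^2\rtimes_{\psi_B}\bbZ$ by $\Phi(\bold{c},n)=(\psi_{C^{-1}}(\bold{c}),n)$ and verify directly from the semidirect product rule that it is a group isomorphism; the verification reduces to the identity $\psi_B^m\circ\psi_{C^{-1}}=\psi_{C^{-1}}\circ\psi_A^m$, which in matrix form says $B^mC^{-1}=C^{-1}A^m$, i.e.\ $CB^mC^{-1}=A^m$, which follows from the hypothesis. For the other case $B^{-1}=C^{-1}AC$, I first use the elementary isomorphism $\bbT^2\rtimes_{\psi_A}\bbZ\cong \bbT^2\rtimes_{\psi_{A^{-1}}}\bbZ$ given by $(\bold{c},n)\mapsto(\bold{c},-n)$, and then apply the previous case with $A^{-1}$ in place of $A$.

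For the \emph{necessity} direction, suppose $\Phi:\bbT^2\rtimes_{\psi_A}\bbZ\to\bbT^2\rtimes_{\psi_B}\bbZ$ is an isomorphism. My plan is to extract a matrix $C\in GL(2,\bbZ)$ and a sign $\epsilon\in\{\pm 1\}$ from $\Phi$. First, I show that $\bbT^2\times\{0\}$ is a characteristic subgroup: it is the unique maximal compact subgroup of the semidirect product, because any compact subgroup must project trivially to $\bbZ$. Consequently $\Phi$ restricts to a continuous automorphism of $\bbT^2$, which by Pontryagin duality has the form $\psi_{C^{-1}}$ for a unique $C\in GL(2,\bbZ)$. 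Second, the projection $\bbT^2\rtimes\bbZ\to\bbZ$ has kernel $\bbT^2\times\{0\}$, so $\Phi$ descends to an automorphism of $\bbZ$, which is multiplication by some $\epsilon\in\{\pm 1\}$; hence $\Phi(\bold{0},1)=(\bold{c}_0,\epsilon)$ for some $\bold{c}_0\in\bbT^2$.

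The final step applies $\Phi$ to the conjugation identity $(\bold{0},1)(\bold{d},0)(\bold{0},-1)=(\psi_A(\bold{d}),0)$. Computing the left side in $\bbT^2\rtimes_{\psi_B}\bbZ$ (the occurrences of $\bold{c}_0$ cancel because $\bbT^2$ is abelian) gives
\[
\psi_B^{\epsilon}\circ\psi_{C^{-1}}=\psi_{C^{-1}}\circ\psi_A,
\]
or in matrix form $B^{\epsilon}C^{-1}=C^{-1}A$, i.e.\ $CB^{\epsilon}C^{-1}=A$. When $\epsilon=+1$ this gives $B=C^{-1}AC$, and when $\epsilon=-1$ it gives $B^{-1}=C^{-1}AC$, which is exactly the conclusion.

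The main technical obstacle is the first step of the necessity direction, namely forcing $\Phi$ to preserve the $\bbT^2$-factor and to restrict there to a map coming from $GL(2,\bbZ)$ rather than some exotic abelian-group automorphism of $\bbT^2$. Treating $\Phi$ as an isomorphism of topological groups (the natural structure inherited from $\Aut(\sA_\al)\subset B(\sA_\al)$) disposes of both issues at once, via the maximal-compact-subgroup argument and Pontryagin duality. A purely algebraic variant would instead characterise $\bbT^2\times\{0\}$ as the subgroup of infinitely divisible elements and then need extra work to recover integrality of the restriction; I plan to take the topological route for brevity.
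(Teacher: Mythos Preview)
Your argument is correct and takes a different route from the paper's. The paper handles both directions at once by first showing (via a divisibility argument) that there is no group epimorphism $\bbT^2\to\bbZ$, and then citing Proposition~2.1 of \cite{ArzLafMin14}, a general result on semidirect products $N\rtimes\bbZ$ which, under that no-epimorphism hypothesis, says two such products are isomorphic if and only if the defining automorphisms are conjugate or conjugate-inverse in $\Aut(N)$. Your self-contained argument unpacks precisely what that black box does in this instance: identify $\bbT^2\times\{0\}$ as characteristic, read off the sign $\epsilon$ from the induced automorphism of the quotient $\bbZ$, and recover the conjugacy relation from the conjugation action of $(\bold{0},1)$ on $\bbT^2$. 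What you gain is transparency and independence from an external reference; what the paper gains is brevity. Your proposed ``algebraic variant'' (characterising $\bbT^2\times\{0\}$ as the divisible elements) is in fact the same mechanism underlying the paper's no-epimorphism lemma, so the two approaches converge at that step. One caveat that applies equally to both proofs: the final passage from conjugacy in $\Aut(\bbT^2)$ to conjugacy in $GL(2,\bbZ)$ tacitly relies on continuity (the paper has already declared $\Aut(\bbT^2)=\{\psi_A:A\in GL(2,\bbZ)\}$ just before Theorem~\ref{t:semidirect}); for purely abstract group isomorphisms this reduction is more delicate than either version acknowledges, so your decision to work topologically is the right one.
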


\begin{proof}
First, we will show that there is no group homomorphism from $\bbT^2$ onto $\bbZ$. Assume that there is a group epimorphism $\phi:\bbT^2 \to \bbZ$. Then $\exists\, \bold{c} \in \bbT^2$ such that $\phi(\bold{c})=1$. Since $\bold{c}\in \bbT^2$, $\exists \, \bold{d} \in \bbT^2$ so that $\bold{d}^2=\bold{c}$. Thus $1=\phi(\bold{c})=\phi(\bold{d}.\bold{d})=\phi(\bold{d})+\phi(\bold{d})=2\phi(\bold{d})$ Hence $\phi(\bold{d})=\frac{1}{2}$, a contradiction. Therefore, there is no such an epimorphism. By Proposition 2.1 of \cite{ArzLafMin14}, $\bbT^2 \rtimes_{\psi_A} \bbZ \cong \bbT^2 \rtimes_{\psi_B} \bbZ$ if and only if $\psi_A$ is conjugate to $\psi_B$ or $\psi_B^{-1}$. Thus $\bbT^2 \rtimes_{\psi_A} \bbZ \cong \bbT^2 \rtimes_{\psi_B} \bbZ$ if and only if $A$ is conjugate to $B$ or $B^{-1}$, i.e., $\exists\, C \in GL(2,\bbZ)$ such that $B=C^{-1}AC$ or $B^{-1}=C^{-1}AC$.
\end{proof}

\begin{remark} \label{r:autiso}
Let $\al$ and $\be$ be positive quadratic irrationals. Then one can ask if
 $\Aut (\sA_\al)$ isometrically isomorphic to $\Aut (\sA_\be)$  implies $\sA_\al$  isometrically isomorphic to $\sA_\be$.   Let $A$  (resp., $B$) $ \in GL(2, \bbZ)$ be such that $\pi(A),\ \pi(\bbT^2)$ generate $\Aut(\sA_{\al})$
(resp., $\sA_{\be}$).  Let $C \in GL(2, \bbZ)$ be such that $B = C^{-1}A C.$  Then $C$ maps the eigenvector $[1, \be]^{\top}$ of $B$ to an eigenvector of $A$ with the same eigenvalue.  If $\det(A) = -1,$ and hence
$\det(B) = -1.$ then $C $ maps $[1, \be]^{\top} $ to a positive multiple of $[1, \al]^{\top}.$ In that case, we can apply Theorem~\ref{t:iso} to conclude that $\sA_{\al} \cong \sA_{\be}.$  However if $A,\ B \in SL(2. \bbZ),$
then we don't know if $C$ maps $[1, \be]^{\top}$ to a positive multiple of $[1, \al]^{\top}$ or the other eigenvector.

Conjecture: If $A \in SL(2, \bbZ)$ is such that $\pi(A), \ \pi(\bbT^2)$ generate $\Aut(\sA_{\al}),$ then $A$ is conjugate to $A^{-1}$ in $GL(2, \bbZ).$

If the conjecture is true, then we obtain a positive answer to the question raised above. We note that if $A \in SL(2, \bbZ)$ and  $\al = \sqrt{\frac{p}{q}}$ as in Proposition~\ref{p:pq}, then $A$ and $A^{-1}$ are conjugate.
\end{remark}


\section{Computation of the Automorphism Group of $\sA_\al$.}

In this section, we will find an explicit formula for any matrix $A \in GL(2,\bbZ)$ such that $\pi(A) \in \Aut(\sA_\al)$ by using the Pell's equations.\\
\indent Pell's equation is a Diophantine equation of the form
\begin{align*}
x^2-ny^2=1,
\end{align*}
where $n$ is a positive nonsquare integer. This equation is always solvable in integers and has the trivial solution with $x=1$ and $y=0$. Moreover, this equation always has nontrivial solutions. It is well-known that the set of solutions of this equation is given by
\begin{align*}
\{(1,0), (-1,0),(x_k,y_k),&(-x_k,y_k),(x_k,-y_k),(-x_k,-y_k)\,:\\
x_{k+1}&=x_1x_k+ny_1y_k,\\
y_{k+1}&=x_1y_k+y_1x_k,\\
&k=1,2,\ldots\},
\end{align*}
where $(x_1,y_1)$ is the fundamental solution of $x^2-ny^2=1$. The fundamental solution  of $x^2-ny^2=1$ is the pair $(x_1, y_1)$, $x_1$ is the smallest positive interger and $y_1$ is the positive integer that  satisfies
\begin{align*}
x_1^2-ny_1^2=1.
\end{align*}
Now, we look at the equation of the form
\begin{align*}
x^2-ny^2=-1,
\end{align*}
where $n$ is a positive nonsquare integer. This equation is called the negative Pell's equation. Note that, this equation may have no solutions in integers. There is a necessary but not sufficient condition of this equation to have integer solutions that all odd prime factors of $n$ must be congruent to $1$ modulo $4$. If this negative Pell's equation has solutions, then the set of all solutions is given by
\begin{align*}
\{(x_k',y_k'),&(-x_k',y_k'),(x_k',-y_k'),(-x_k',-y_k')\,:\\
x_{k+1}'&=(x_1'^2+ny_1'^2)x_k'+2nx_1'y_1'y_k',\\
y_{k+1}'&=(x_1'^2+ny_1'^2)y_k'+2x_1'y_1'x_k',\\
&k=1,2,\ldots\},
\end{align*}
where $(x_1',y_1')$ is the fundamental solution of $x^2-ny^2=-1$. Moreover, the fundamental solution $(x_1,y_1)$ of the Pell's equation $x^2-ny^2=1$ can be obtained from the fundamental solution $(x_1',y_1')$ of the negative Pell's equation
$x^2-ny^2=-1$ by
\begin{align*}
x_1=x_1'^2+ny_1'^2 \text{ and } y_1=2x_1'y_1'.
\end{align*}
Another equations that relates to our problems are the equations of the forms
\begin{align*}
x^2-ny^2=\pm4,
\end{align*}
where $n$ is a positive nonsquare integer. The equation $x^2-ny^2=4$ is always solvable over integers.  The set of solutions of this equation is given by
\begin{align*}
\{(2,0), (-2,0),(x_k,y_k),&(-x_k,y_k),(x_k,-y_k),(-x_k,-y_k)\,:\\
x_{k+1}&=\frac{1}{2}(x_1x_k+ny_1y_k),\\
y_{k+1}&=\frac{1}{2}(x_1y_k+y_1x_k),\\
&k=1,2,\ldots\},
\end{align*}
where $(x_1,y_1)$ is the fundamental solution of $x^2-ny^2=4$. The equation $x^2-ny^2=-4$ may not be solvable over integers. But if it has solutions, then the set of all solutions is
\begin{align*}
\{(x_k',y_k'),&(-x_k',y_k'),(x_k',-y_k'),(-x_k',-y_k')\,:\\
x_{k+1}'&=\frac{1}{4}((x_1'^2+ny_1'^2)x_k'+2nx_1'y_1'y_k'),\\
y_{k+1}'&=\frac{1}{4}((x_1'^2+ny_1'^2)y_k'+2x_1'y_1'x_k'),\\
&k=1,2,\ldots\},
\end{align*}
where $(x_1',y_1')$ is the fundamental solution of $x^2-ny^2=-4$.
Moreover, the fundamental solution $(x_1,y_1)$ of the equation $x^2-ny^2=4$ can be obtained from the fundamental solution $(x_1',y_1')$ of
$x^2-ny^2=-4$ by
\begin{align*}
x_1=\frac{1}{2}(x_1'^2+ny_1'^2) \text{ and } y_1=x_1'y_1'.
\end{align*}
 \indent Now, for each quadratic irrational $\al>0$, we want to examine the automorphism group $\Aut(\sA_\al)$ of $\sA_\al$. Note that for any positive quadratic irrational number $\alpha$, we can write $\al$ in one of following forms:\\
(1) $\al=\sqrt{\frac{p}{q}}$, $p,q \in \bbN$, and $\gcd(p,q)=1$,\\
(2) $\al=\frac{r}{s}+k\sqrt{\frac{p}{q}}$, $r \in \bbZ, p,q,s \in \bbN$, $k \in \{-1,1\}$, $\gcd(r,s)=1$, and $\gcd(p,q)=1$.\\
Note that if $\sqrt{\frac{p}{q}}$ is irrational, then $pq$ is nonsquare.

\begin{proposition}\label{p:pq}
Let $\alpha$ be a positive irrational number of the form $\sqrt{\frac{p}{q}}$, where $p,q \in \bbN$, and $\gcd(p,q)=1$.\\
(1) If the equation $x^2-pqy^2=-1$ is not solvable over the integers, then, for any $A \in GL(2,\bbZ)$,
$\pi(A)$ is an automorphism of $\sA_{\al}$ if and only if $A=\begin{bmatrix}
x_1 & qy_1 \\
py_1 & x_1
\end{bmatrix}^n$, for some $n \in \bbZ$, where $(x_1,y_1)$ is the fundamental solution of the Pell's equation $x^2-pqy^2=1$.\\
(2) If the equation $x^2-pqy^2=-1$ is solvable over the integers, then, for any $A \in GL(2,\bbZ)$,
$\pi(A)$ is an automorphism of $\sA_{\al}$ if and only if $A=\begin{bmatrix}
x_1' & qy_1' \\
py_1' & x_1'
\end{bmatrix}^n$, for some $n \in \bbZ$, where $(x_1',y_1')$ is the fundamental solution of the negative Pell's equation  $x^2-pqy^2=-1$.
\end{proposition}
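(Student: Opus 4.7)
The plan is to combine Corollary~\ref{c:auto} with the standard structure of the unit group in $\bbZ[\sqrt{pq}]$. The starting point is that for $\al = \sqrt{p/q}$, the quadratic eigen-equation from Corollary~\ref{c:auto} simplifies dramatically. Substituting $\al^2 = p/q$ into $n_1\al^2 + (m_1-n_2)\al - m_2 = 0$ gives $(m_1-n_2)\al = m_2 - n_1 p/q$; irrationality of $\al$ forces $m_1 = n_2$ and $m_2 q = n_1 p$, and then $\gcd(p,q)=1$ forces $n_1 = qk$, $m_2 = pk$ for some $k\in\bbZ$. Thus every admissible $A$ has the form $A = \begin{bmatrix} a & qk \\ pk & a \end{bmatrix}$, the determinant condition becomes the Pell-type identity $a^2 - pqk^2 = \pm 1$, and the sign condition $m_1 + \al n_1 > 0$ becomes $a + k\sqrt{pq} > 0$.

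Next I would verify two matrix identities by direct computation. Writing $A_1 = \begin{bmatrix} x_1 & qy_1 \\ py_1 & x_1 \end{bmatrix}$, an induction using the Pell recursion $x_{n+1} = x_1 x_n + pq y_1 y_n$, $y_{n+1} = x_1 y_n + y_1 x_n$ gives $A_1^n = \begin{bmatrix} x_n & qy_n \\ py_n & x_n \end{bmatrix}$ for $n \geq 1$; since $\det A_1 = 1$, one has $A_1^{-n} = \begin{bmatrix} x_n & -qy_n \\ -py_n & x_n \end{bmatrix}$. In case (2), the relations $x_1 = (x_1')^2 + pq(y_1')^2$ and $y_1 = 2 x_1' y_1'$ recalled in the section's introduction yield $(A_1')^2 = A_1$ by direct multiplication.

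For case (1), the integer solutions of $a^2 - pqk^2 = 1$ are $\{(\pm x_n, \pm y_n) : n \geq 0\}$ with $(x_0,y_0) = (1,0)$. Among these, the positivity constraint $a + k\sqrt{pq} > 0$ retains exactly $(x_n, \pm y_n)$, since $x_n - y_n\sqrt{pq} = (x_n + y_n\sqrt{pq})^{-1} > 0$ while $\pm x_n \mp y_n\sqrt{pq} < 0$ for the other sign choices (using $x_n > y_n\sqrt{pq}$). These matrices are precisely $A_1^n$ for $n \in \bbZ$, and by hypothesis no admissible matrix can have determinant $-1$, so the enumeration is complete.

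For case (2), I would argue that the admissible matrices form a cyclic group with generator $A_1'$. The bijection $(a,k) \mapsto a + k\sqrt{pq}$ identifies solutions of $a^2 - pqk^2 = \pm 1$ with units of $\bbZ[\sqrt{pq}]$ of norm $\pm 1$; this group is cyclic, generated by $\ep' = x_1' + y_1'\sqrt{pq}$, with odd powers of norm $-1$ and even powers of norm $+1$, and positivity of the unit corresponds to $a + k\sqrt{pq} > 0$. Matching powers of $A_1'$ with powers of $\ep'$ via $(A_1')^2 = A_1$ then identifies every admissible $A$ with a unique $(A_1')^n$: the even exponents reduce to the case-(1) enumeration, and the odd exponents $(A_1')^{2n+1} = A_1^n A_1'$ match the negative-Pell recursion for $(x_{n+1}', y_{n+1}')$ against the $(1,1)$-entry $x_1 x_n' + pq y_1 y_n'$. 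The main technical obstacle is the bookkeeping in this case: aligning the indexing of the two different recursions with even/odd powers of $A_1'$ and checking that every one of the four sign choices for negative-Pell solutions producing positive eigenvalue is realized by some integer power $(A_1')^n$. Everything else is a routine calculation or a standard fact about Pell units.
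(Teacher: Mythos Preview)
Your proposal is correct and follows essentially the same route as the paper: reduce via Corollary~\ref{c:auto} to the form $A=\begin{bmatrix} a & qk\\ pk & a\end{bmatrix}$ with $a^2-pqk^2=\pm1$ and $a+k\sqrt{pq}>0$, then enumerate these matrices as powers of the fundamental one using the Pell recursions. The only cosmetic difference is that in case~(2) you phrase the enumeration via the unit group of $\bbZ[\sqrt{pq}]$, whereas the paper verifies directly by induction that odd and even powers of $A_1'$ yield the negative and positive Pell solutions respectively; these are the same computation in different language.
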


\begin{proof}
Let $A=\begin{bmatrix}
a & b \\
c & d
\end{bmatrix} \in GL(2,\bbZ)$ be such that $a+b\sqrt{\frac{p}{q}}>0$ and
\begin{align*}
b\frac{p}{q}+(a-d)\sqrt{\frac{p}{q}}-c=0.
\end{align*}
Then $a=d$ and $c=\frac{p}{q}b$. So $A$ is of the form $\begin{bmatrix}
a & b \\
\frac{p}{q}b & a
\end{bmatrix}$.
Since $A \in GL(2,\bbZ)$ and $\gcd(p,q)=1$, $q|b$. So $\exists\, j \in \bbZ$ such that $b=qj$. Thus
\begin{align*}
A=\begin{bmatrix}
a & qj \\
pj & a
\end{bmatrix}.
\end{align*}
If $\operatorname{det}(A)=1$, then $a^2-pqj^2=1$. So $a^2=1+pqj^2>pqj^2=\frac{p}{q}b^2$. Thus $|a|>|b|\sqrt{\frac{p}{q}}$. Since $a+b\sqrt{\frac{p}{q}}>0$, $a$ must be positive. In this case, we have to find integers $a ,j$ such that $a>0$, $a^2-pqj^2=1$.\\
If $\operatorname{det}(A)=-1$, then $a^2-pqj^2=-1$. So $\frac{p}{q}b^2=pqj^2=1+a^2>a^2$. Thus $|b|\sqrt{\frac{p}{q}}>|a|$. Since $a+b\sqrt{\frac{p}{q}}>0$, $b$ must be positive, i.e., $j$ must be positive. In this case, we have to find integers $a ,j$ such that $j>0$, $a^2-pqj^2=-1$.\\
Case (1) :  The equation $x^2-pqy^2=-1$ is not solvable over the integers.\\
Then $\operatorname{det}(A)$ must be 1. Now, $A$ is of the form $\begin{bmatrix}
a & qj \\
pj & a
\end{bmatrix}$, where $a ,j \in \bbZ$ such that $a>0$ and $a^2-pqj^2=1$.
Let $(x_1,y_1)$ be the positive fundamental solution of the Pell's equation $x^2-pqy^2=1$. Then all nonnegative solutions $(x,y)$ are given by the set
\begin{align*}
\{(1,0), (x_n,y_n) : x_{n+1}=x_1x_n+pqy_1y_n, y_{n+1}=x_1y_n+y_1x_n, n=1,2,\ldots\}.
\end{align*}
By using mathematical induction and recurence relations of solutions, it is not hard to show that, for any $n \in \bbN$,
\begin{align*}
\begin{bmatrix}
x_1 & qy_1 \\
py_1 & x_1
\end{bmatrix}^{n}=\begin{bmatrix}
x_{n} & qy_{n} \\
py_{n} & x_{n}
\end{bmatrix}
\end{align*}
and
\begin{align*}
\begin{bmatrix}
x_1 & qy_1 \\
py_1 & x_1
\end{bmatrix}^{-n}=\begin{bmatrix}
x_{n} & q(-y_{n}) \\
p(-y_{n}) & x_{n}
\end{bmatrix}.
\end{align*}
Note that if $a=1$ and $j=0$, $A=\begin{bmatrix}
1 & 0 \\
0 & 1
\end{bmatrix}$.
Thus  $A=\begin{bmatrix}
x_1 & qy_1 \\
py_1 & x_1
\end{bmatrix}^n$, for some $n \in \bbZ$.\\
Case (2) :  The equation $x^2-pqy^2=-1$ is solvable over the integers.\\
Then $A$ must be of the form $\begin{bmatrix}
a & qj \\
pj & a
\end{bmatrix}$ where $a,j \in \bbZ$,
\begin{align*}
a>0 \text{ and } a^2-pqj^2=1
\end{align*}
or
\begin{align*}
j>0 \text{ and } a^2-pqj^2=-1.
\end{align*}
Let $(x_1',y_1')$ be the fundamental solution of the negative Pell's equation $x^2-pqy^2=-1$. Then $(x_1,y_1)=(x_1'^2+pqy_1'^2, 2x_1'y_1')$ is the fundamental solution of $x^2-pqy^2=1$. Also,  all positive solutions $(x,y)$ of $x^2-pqy^2=-1$ are given by the set
\begin{align*}
\{(x_n',y_n') : &x_{n+1}'=(x_1'^2+pqy_1'^2)x_n'+2pqx_1'y_1'y_n',\\
&y_{n+1}'=(x_1'^2+pqy_1'^2)y_n'+2x_1'y_1'x_n', n=1,2,\ldots\}
\end{align*}
and  all nonnegative solutions $(x,y)$ of $x^2-pqy^2=1$ are given by the set
\begin{align*}
\{(1,0), (x_n,y_n) : x_{n+1}=x_1x_n+pqy_1y_n, y_{n+1}=x_1y_n+y_1x_n, n=1,2,\ldots\}.
\end{align*}
Again, by mathematical induction, we have
\begin{align*}
\begin{bmatrix}
x_1' & qy_1' \\
py_1' & x_1'
\end{bmatrix}^{2n-1}&=\begin{bmatrix}
x_{n}' & qy_{n}' \\
py_{n}' & x_{n}'
\end{bmatrix},\\
\begin{bmatrix}
x_1' & qy_1' \\
py_1' & x_1'
\end{bmatrix}^{-(2n-1)}&=\begin{bmatrix}
-x_{n}' & qy_{n}' \\
py_{n}' & -x_{n}'
\end{bmatrix}, n \in \bbN.
\end{align*}
We also have,
\begin{align*}
\begin{bmatrix}
x_1' & qy_1' \\
py_1' & x_1'
\end{bmatrix}^{2n}&=\begin{bmatrix}
x_1'^2+pqy_1'^2 & 2q2x_1'y_1' \\
2px_1'y_1' & x_1'^2+pqy_1'^2
\end{bmatrix}^n=\begin{bmatrix}
x_1 & qy_1 \\
py_1 & x_1
\end{bmatrix}^n=\begin{bmatrix}
x_{n} & qy_{n} \\
py_{n} & x_{n}
\end{bmatrix}
\end{align*}
and
\begin{align*}
\begin{bmatrix}
x_1' & qy_1' \\
py_1' & x_1'
\end{bmatrix}^{-2n}&=\begin{bmatrix}
x_{n} & q(-y_{n}) \\
p(-y_{n}) & x_{n}
\end{bmatrix}, n \in \bbN.
\end{align*}
If $a=1$ and $j=0$, then $A=\begin{bmatrix}
1 & 0 \\
0 & 1
\end{bmatrix}$.
Thus  $A=\begin{bmatrix}
x_1' & qy_1' \\
py_1' & x_1'
\end{bmatrix}^n$, for some $n \in \bbZ$.
\end{proof}

\begin{proposition}\label{p:sodd}
Let $\alpha$ be a positive irrational number of the form $\frac{r}{s}+k\sqrt{\frac{p}{q}}$, where $r \in \bbZ, p,q,s \in \bbN$, $k \in \{-1,1\}$, $s$ is odd, $\gcd(r,s)=1$ and $\gcd(p,q)=1$. Let
$d_1=\gcd(ps^2-qr^2,qs)$.\\
(1) If the equation $x^2-\frac{pqs^4}{d_1^2}y^2=-1$ is not solvable over the integers, then, for any $A \in GL(2,\bbZ)$,
$\pi(A)$ is an automorphism of $\sA_{\al}$ if and only if $A=\begin{bmatrix}
-\frac{qrs}{d_1}y_1+x_1 & \frac{qs^2}{d_1}y_1 \\
(\frac{ps^2-qr^2}{d_1})y_1 & \frac{qrs}{d_1}y_1+x_1
\end{bmatrix}^n$, for some $n \in \bbZ$, where $(x_1,y_1)$ is the fundamental solution of $x^2-\frac{pqs^4}{d_1^2}y^2=1$.\\
(2) If the equation  $x^2-\frac{pqs^4}{d_1^2}y^2=-1$ is solvable over the integers, then, for any $A \in GL(2,\bbZ)$,
$\pi(A)$ is an automorphism of $\sA_{\al}$ if and only if $A=\begin{bmatrix}
-\frac{qrs}{d_1}ky_1'+x_1' & \frac{qs^2}{d_1}ky_1' \\
(\frac{ps^2-qr^2}{d_1})ky_1' & \frac{qrs}{d_1}ky_1'+x_1'
\end{bmatrix}^n$, for some $n \in \bbZ$, where $(x_1',y_1')$ is the fundamental solution of $x^2-\frac{pqs^4}{d_1^2}y^2=-1$.
\end{proposition}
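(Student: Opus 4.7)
The plan is to mirror the structure of Proposition~\ref{p:pq}. By Corollary~\ref{c:auto}, a matrix $A = \begin{bmatrix} m_1 & n_1 \\ m_2 & n_2 \end{bmatrix} \in GL(2,\bbZ)$ gives $\pi(A) \in \Aut(\sA_\al)$ iff $m_1 + \al n_1 > 0$ and $n_1 \al^2 + (m_1 - n_2)\al - m_2 = 0$. First I would substitute $\al = r/s + k\sqrt{p/q}$, using $\al^2 = (qr^2 + ps^2)/(qs^2) + 2k(r/s)\sqrt{p/q}$, and equate the rational and $\sqrt{p/q}$-components separately. The irrational component yields $s(n_2 - m_1) = 2r n_1$; since $s$ is odd and $\gcd(r,s) = 1$, we have $\gcd(s, 2r) = 1$, which forces $s \mid n_1$. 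Writing $n_1 = s n_1'$ and substituting back into the cleared rational-part equation yields $n_1'(ps^2 - qr^2) = m_2 \, qs$.

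Next, setting $d_1 = \gcd(ps^2 - qr^2, qs)$, the integers $u := (ps^2 - qr^2)/d_1$ and $v := qs/d_1$ are coprime, so $n_1' u = m_2 v$ forces $n_1' = vy$ and $m_2 = uy$ for a unique $y \in \bbZ$. Thus $n_1 = (qs^2/d_1)y$, $m_2 = ((ps^2 - qr^2)/d_1)y$, and $n_2 - m_1 = 2r n_1' = (2qrs/d_1)y$. Letting $x = m_1 + (qrs/d_1)y$ (which is an integer, since $qrs/d_1 = rv \in \bbZ$), one obtains $A = xI + yM$ where
\[ M = \begin{bmatrix} -qrs/d_1 & qs^2/d_1 \\ (ps^2 - qr^2)/d_1 & qrs/d_1 \end{bmatrix}. \]

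The key algebraic observation is that $\operatorname{tr}(M) = 0$ and $\det(M) = -pqs^4/d_1^2$, so Cayley--Hamilton gives $M^2 = N\,I$ with $N = pqs^4/d_1^2$. A direct expansion yields $\det(A) = x^2 - Ny^2$, so the condition $\det(A) = \pm 1$ becomes one of the Pell equations $x^2 - Ny^2 = \pm 1$. A similar computation gives $m_1 + \al n_1 = x + k y \sqrt{N}$, so the positivity condition reads $x + k y \sqrt{N} > 0$. This is precisely why case~(2) features the factor $k$: for $k = 1$ one takes $y = y_1'$, while for $k = -1$ one must take $y = -y_1'$ (so that $x_1' + k(-y_1')\sqrt{N} = x_1' + y_1'\sqrt{N} > 0$, whereas $x_1' - y_1'\sqrt{N} < 0$ since $x_1'^2 - N y_1'^2 = -1$).

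Finally, I would treat the two cases exactly as in Proposition~\ref{p:pq}. Because $M^2 = NI$, the map $xI + yM \mapsto x + y\sqrt{N}$ is a ring homomorphism onto $\bbZ[\sqrt{N}]$, so matrix multiplication corresponds to multiplication in $\bbZ[\sqrt{N}]$, and the Pell recurrences translate directly into matrix powers. In case~(1), where $x^2 - Ny^2 = -1$ is unsolvable, every admissible $A$ is a power of $x_1 I + y_1 M$; in case~(2), every admissible $A$ is a power of $x_1' I + (k y_1') M$. A routine induction on $n$ tracking the Pell recurrences verifies the explicit matrix formulas, in verbatim parallel to Proposition~\ref{p:pq}. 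The main obstacle I anticipate is not conceptual but rather the careful integrality bookkeeping in the second paragraph; the hypothesis that $s$ is odd enters only at the divisibility step $s \mid n_1$, which is exactly why the even-$s$ case requires separate analysis in the next proposition.
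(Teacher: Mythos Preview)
Your proposal is correct and follows the same overall strategy as the paper: apply Corollary~\ref{c:auto}, separate rational and irrational parts to force the divisibility $s\mid n_1$ (using that $s$ is odd), then use $d_1=\gcd(ps^2-qr^2,qs)$ to parametrize $A$ by a pair $(x,y)$ satisfying a Pell equation, and finally invoke the structure of Pell solutions to exhibit every admissible $A$ as a power of the fundamental one.

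Where you differ is in packaging. The paper, after reaching the parametrized form of $A$, applies the quadratic formula to the equation $\det(A)=\pm 1$ to solve for the diagonal entry, then does a case-by-case sign analysis for the positivity condition $a+b\al>0$, and verifies the power formulas by induction on the Pell recurrences. You instead write $A=xI+yM$ with $\operatorname{tr}(M)=0$, use Cayley--Hamilton to get $M^2=NI$, and observe that $xI+yM\mapsto x+y\sqrt{N}$ is a ring homomorphism onto $\bbZ[\sqrt{N}]$; this immediately gives $\det(A)=x^2-Ny^2$, $m_1+\al n_1=x+ky\sqrt{N}$, and turns the ``powers $\leftrightarrow$ Pell recurrences'' step into a one-line consequence of multiplicativity rather than an induction. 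Your framing is cleaner and makes the role of $k$ in case~(2) more transparent. One small point you leave implicit that the paper spells out: that $N=pqs^4/d_1^2$ is a nonsquare \emph{integer}; integrality follows from $\det(M)\in\bbZ$ since $M$ has integer entries, and nonsquareness from the irrationality of $\sqrt{pq}$, so this is easily filled in.
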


\begin{proof}
First, note that $\frac{qrs}{d_1},\frac{qs^2}{d_1},\frac{ps^2-qr^2}{d_1} \in \bbZ$.  We will show that $\frac{pqs^4}{d_1^2}$ is a nonsquare integer. Since $d_1|(ps^2-qr^2)$ and $d_1|qs^2$, we have $d_1^2|(pqs^4-q^2r^2s^2)$. We also have that $d_1^2|q^2r^2s^2$. Thus $d_1^2|pqs^4$. Since $\al$ is irrational, $\sqrt{\frac{p}{q}}$ is irrational. Thus $pq$ is nonsquare. Hence $\frac{pqs^4}{d_1^2}$ must be nonsquare.
Now, let $A=\begin{bmatrix}
a & b \\
c & d
\end{bmatrix} \in GL(2,\bbZ)$ be such that $a+b\alpha>0$ and
\begin{align*}
b\alpha^2+(a-d)\alpha-c=0.
\end{align*}
Then $a-d=-2\frac{r}{s}b$ and $c=(\frac{ps^2-qr^2}{qs^2})b$. So $A$ is of the form
\begin{align*}
\begin{bmatrix}
a & b \\
(\frac{ps^2-qr^2}{qs^2})b & a+2\frac{r}{s}b
\end{bmatrix}.
\end{align*}
Since $A \in GL(2,\bbZ)$ and $\gcd(2r,s)=1$, we have $s|b$. Then there exists $j \in \bbZ$ such that $b=sj$. Now,
\begin{align*}
A=\begin{bmatrix}
a & sj \\
(\frac{ps^2-qr^2}{qs})j & a+2rj
\end{bmatrix}.
\end{align*}
Since $\gcd(\frac{ps^2-qr^2}{d_1},\frac{qs}{d_1})=1$, $\frac{qs}{d_1}|j$. So $j=\frac{qs}{d_1}l$ for some $l\in \bbZ$. Thus
\begin{align*}
A=\begin{bmatrix}
a & \frac{qs^2}{d_1}l \\
(\frac{ps^2-qr^2}{d_1})l & a+2\frac{qrs}{d_1}l
\end{bmatrix}.
\end{align*}
If $\operatorname{det}(A)=1$, then $a^2+2\frac{qrs}{d_1}la-(\frac{ps^2-qr^2}{d_1})\frac{qs^2}{d_1}l^2=1$. By the quadratic formula,
\begin{align*}
a=-\frac{qrs}{d_1}l\pm \sqrt{\frac{pqs^4}{d_1^2}l^2+1}.
\end{align*}
If $a=-\frac{qrs}{d_1}l+\sqrt{\frac{pqs^4}{d_1^2}l^2+1}$, then
\begin{align*}
a+b(\frac{r}{s}+k\sqrt{\frac{p}{q}})&=-\frac{qrs}{d_1}l+\sqrt{\frac{pqs^4}{d_1^2}l^2+1}+ \frac{qs^2}{d_1}l(\frac{r}{s}+k\sqrt{\frac{p}{q}})\\
&=\sqrt{\frac{pqs^4}{d_1^2}l^2+1}+ k\frac{s^2\sqrt{pq}}{d_1}l>0.
\end{align*}
If $a=-\frac{qrs}{d_1}l-\sqrt{\frac{pqs^4}{d_1^2}l^2+1}$, then
\begin{align*}
a+b(\frac{r}{s}+k\sqrt{\frac{p}{q}})&=-\frac{qrs}{d_1}l-\sqrt{\frac{pqs^4}{d_1^2}l^2+1}+ \frac{qs^2}{d_1}l(\frac{r}{s}+k\sqrt{\frac{p}{q}})\\
&=-\sqrt{\frac{pqs^4}{d_1^2}l^2+1}+ k\frac{s^2\sqrt{pq}}{d_1}l<0.
\end{align*}
Hence $a=-\frac{qrs}{d_1}l+\sqrt{\frac{pqs^4}{d_1^2}l^2+1}$ . Now $A$ is of the form
\begin{align*}
\begin{bmatrix}
-\frac{qrs}{d_1}l+\sqrt{\frac{pqs^4}{d_1^2}l^2+1} & \frac{qs^2}{d_1}l \\
(\frac{ps^2-qr^2}{d_1})l & \frac{qrs}{d_1}l+\sqrt{\frac{pqs^4}{d_1^2}l^2+1}
\end{bmatrix}.
\end{align*}
Since $A \in GL(2, \bbZ)$, $\sqrt{\frac{pqs^4}{d_1^2}l^2+1}$ must be an integer. Thus
\begin{align*}
A=\begin{bmatrix}
-\frac{qrs}{d_1}l+x & \frac{qs^2}{d_1}l \\
(\frac{ps^2-qr^2}{d_1})l & \frac{qrs}{d_1}l+x
\end{bmatrix},
\end{align*}
where $x,l\in \bbZ, x>0$ and $x^2-\frac{pqs^4}{d_1^2}l^2=1$. In this case, we have to find integers $x ,l$ such that $x>0$ and $x^2-\frac{pqs^4}{d_1^2}l^2=1$.\\
If $\operatorname{det}(A)=-1$, then $a^2+2\frac{qrs}{d_1}la-(\frac{ps^2-qr^2}{d_1})\frac{qs^2}{d_1}l^2=-1$. So
\begin{align*}
a=-\frac{qrs}{d_1}l\pm \sqrt{\frac{pqs^4}{d_1^2}l^2-1}.
\end{align*}
If $a=-\frac{qrs}{d_1}l+\sqrt{\frac{pqs^4}{d_1^2}l^2-1}$, then
\begin{align*}
a+b(\frac{r}{s}+k\sqrt{\frac{p}{q}})&=-\frac{qrs}{d_1}l+\sqrt{\frac{pqs^4}{d_1^2}l^2-1}+ \frac{qs^2}{d_1}l(\frac{r}{s}+k\sqrt{\frac{p}{q}})\\
&=\sqrt{\frac{pqs^4}{d_1^2}l^2-1}+ k\frac{s^2\sqrt{pq}}{d_1}l \begin{cases} <0
&\mbox{if }k=1 \text{ and } l<0\\
>0 & \mbox{if }k=1 \text{ and } l>0\\
>0 & \mbox{if }k=-1 \text{ and } l<0\\
<0 & \mbox{if }k=-1 \text{ and } l>0
\end{cases}.
\end{align*}
If $a=-\frac{qrs}{d_1}l-\sqrt{\frac{pqs^4}{d_1^2}l^2-1}$, then
\begin{align*}
a+b(\frac{r}{s}+k\sqrt{\frac{p}{q}})&=-\frac{qrs}{d_1}l-\sqrt{\frac{pqs^4}{d_1^2}l^2-1}+ \frac{qs^2}{d_1}l(\frac{r}{s}+k\sqrt{\frac{p}{q}})\\
&=-\sqrt{\frac{pqs^4}{d_1^2}l^2-1}+ k\frac{s^2\sqrt{pq}}{d_1}l\begin{cases} <0
&\mbox{if }k=1 \text{ and } l<0\\
>0 & \mbox{if }k=1 \text{ and } l>0\\
>0 & \mbox{if }k=-1 \text{ and } l<0\\
<0 & \mbox{if }k=-1 \text{ and } l>0
\end{cases}.
\end{align*}
Since $A \in GL(2, \bbZ)$, $\sqrt{\frac{pqs^4}{d_1^2}l^2-1}$ must be an integer. If $k=1$, then
\begin{align*}
A=\begin{bmatrix}
-\frac{qrs}{d_1}l+x & \frac{qs^2}{d_1}l \\
(\frac{ps^2-qr^2}{d_1})l & \frac{qrs}{d_1}l+x
\end{bmatrix},
\end{align*}
where $x,l\in \bbZ, l>0$ and $x^2-\frac{pqs^4}{d_1^2}l^2=-1$.
In this case, we have to find integers $x ,l$ such that $l>0$ and $x^2-\frac{pqs^4}{d_1^2}l^2=-1$.
If $k=-1$, then
\begin{align*}
A=\begin{bmatrix}
-\frac{qrs}{d_1}l+x & \frac{qs^2}{d_1}l \\
(\frac{ps^2-qr^2}{d_1})l & \frac{qrs}{d_1}l+x
\end{bmatrix},
\end{align*}
where $x,l\in \bbZ, l<0$ and $x^2-\frac{pqs^4}{d_1^2}l^2=-1$.
In this case, we have to find integers $x ,l$ such that $l<0$ and $x^2-\frac{pqs^4}{d_1^2}l^2=-1$.\\
Case (1) :  The equation $x^2-\frac{pqs^4}{d_1^2}y^2=-1$ is not solvable over the integers.\\
Then $\operatorname{det}(A)$ must be 1. Thus $A=\begin{bmatrix}
-\frac{qrs}{d_1}l+x & \frac{qs^2}{d_1}l \\
(\frac{ps^2-qr^2}{d_1})l & \frac{qrs}{d_1}l+x
\end{bmatrix}$, for some $x,l\in \bbZ, x>0$ and $x^2-\frac{pqs^4}{d_1^2}l^2=1$.
Let $(x_1,y_1)$ be the fundamental solution of the Pell's equation $x^2-\frac{pqs^4}{d_1^2}y^2=1$. Then all nonnegative solutions $(x,y)$ are given by the set
\begin{align*}
\{(1,0), (x_n,y_n) : &x_{n+1}=x_1x_n+\frac{pqs^4}{d_1^2}y_1y_n,\\
&y_{n+1}=x_1y_n+y_1x_n, n=1,2,\ldots\}.
\end{align*}
Thus $A=\begin{bmatrix}
-\frac{qrs}{d_1}y_1+x_1 & \frac{qs^2}{d_1}y_1 \\
(\frac{ps^2-qr^2}{d_1})y_1 & \frac{qrs}{d_1}y_1+x_1
\end{bmatrix}^n$, for some $n \in \bbZ$.\\
Case (2) :  The equation $x^2-\frac{pqs^4}{d_1^2}y^2=-1$ is solvable over the integers.\\
If $k=1$, then $A$ must be of the form $\begin{bmatrix}
-\frac{qrs}{d_1}l+x & \frac{qs^2}{d_1}l \\
(\frac{ps^2-qr^2}{d_1})l & \frac{qrs}{d_1}l+x
\end{bmatrix}$ where $x,l \in \bbZ$,
\begin{align*}
x>0, \text{ and } x^2-\frac{pqs^4}{d_1^2}l^2=1
\end{align*}
or
\begin{align*}
l>0, \text{ and } x^2-\frac{pqs^4}{d_1^2}l^2=-1.
\end{align*}
If $k=-1$, then $A$ must be of the form $\begin{bmatrix}
-\frac{qrs}{d_1}l+x & \frac{qs^2}{d_1}l \\
(\frac{ps^2-qr^2}{d_1})l & \frac{qrs}{d_1}l+x
\end{bmatrix}$ where $x,l \in \bbZ$,
\begin{align*}
x>0, \text{ and } x^2-\frac{pqs^4}{d_1^2}l^2=1
\end{align*}
or
\begin{align*}
l<0, \text{ and } x^2-\frac{pqs^4}{d_1^2}l^2=-1.
\end{align*}
Let $(x_1',y_1')$ be the fundamental solution of the negative Pell's equation $x^2-\frac{pqs^4}{d_1^2}y^2=-1$. Then $(x_1,y_1)=(x_1'^2+\frac{pqs^4}{d_1^2}y_1'^2, 2x_1'y_1')$ is the fundamental solution of $x^2-\frac{pqs^4}{d_1^2}y^2=1$. Also, all positive solutions $(x,y)$ of $x^2-\frac{pqs^4}{d_1^2}y^2=-1$ are given by the set
\begin{align*}
\{(x_n',y_n') : &x_{n+1}'=(x_1'^2+\frac{pqs^4}{d_1^2}y_1'^2)x_n'+2\frac{pqs^4}{d_1^2}x_1'y_1'y_n',\\ &y_{n+1}'=(x_1'^2+\frac{pqs^4}{d_1^2}y_1'^2)y_n'+2x_1'y_1'x_n', n=1,2,\ldots\}
\end{align*}
and  all nonnegative solutions $(x,y)$ of $x^2-\frac{pqs^4}{d_1^2}y^2=1$ are given by the set
\begin{align*}
\{(1,0), (x_n,y_n) : &x_{n+1}=x_1x_n+\frac{pqs^4}{d_1^2}y_1y_n,\\
&y_{n+1}=x_1y_n+y_1x_n, n=1,2,\ldots\}.
\end{align*}
Thus  $A=\begin{bmatrix}
-\frac{qrs}{d_1}ky_1'+x_1' & \frac{qs^2}{d_1}ky_1' \\
(\frac{ps^2-qr^2}{d_1})ky_1' & \frac{qrs}{d_1}ky_1'+x_1'
\end{bmatrix}^n$, for some $n \in \bbZ$.
\end{proof}

\begin{proposition}\label{p:seven}
Let $\alpha$ be a positive irrational number of the form $\frac{r}{s}+k\sqrt{\frac{p}{q}}$, where $r \in \bbZ, p,q,s \in \bbN$, $k \in \{-1,1\}$, $s$ is even, $\gcd(r,s)=1$ and $\gcd(p,q)=1$. Let
$d_1=\gcd(ps^2-qr^2,2qs)$.\\
(1) If $d_1\,|\,qs$ and the equation $x^2-\frac{pqs^4}{d_1^2}y^2=-1$ is not solvable over the integers, then, for any $A \in GL(2,\bbZ)$,
$\pi(A)$ is an automorphism of $\sA_{\al}$ if and only if $A=\begin{bmatrix}
-\frac{qrs}{d_1}y_1+x_1 & \frac{qs^2}{d_1}y_1 \\
(\frac{ps^2-qr^2}{d_1})y_1 & \frac{qrs}{d_1}y_1+x_1
\end{bmatrix}^n$, for some $n \in \bbZ$, where $(x_1,y_1)$ is the fundamental solution of $x^2-\frac{pqs^4}{d_1^2}y^2=1$.\\
(2) If $d_1\,|\,qs$ and the equation  $x^2-\frac{pqs^4}{d_1^2}y^2=-1$ is solvable over the integers, then, for any $A \in GL(2,\bbZ)$,
$\pi(A)$ is an automorphism of $\sA_{\al}$ if and only if $A=\begin{bmatrix}
-\frac{qrs}{d_1}ky_1'+x_1' & \frac{qs^2}{d_1}ky_1' \\
(\frac{ps^2-qr^2}{d_1})ky_1' & \frac{qrs}{d_1}ky_1'+x_1'
\end{bmatrix}^n$, for some $n \in \bbZ$, where $(x_1',y_1')$ is the fundamental solution of $x^2-\frac{pqs^4}{d_1^2}y^2=-1$.\\
(3) If $d_1 \nmid qs$ and the equation $x^2-\frac{4pqs^4}{d_1^2}y^2=-4$ is not solvable over the integers, then, for any $A \in GL(2,\bbZ)$,
$\pi(A)$ is an automorphism of $\sA_{\al}$ if and only if $A=\begin{bmatrix}
-\frac{qrs}{d_1}y_1+\frac{x_1}{2} & \frac{qs^2}{d_1}y_1\\
(\frac{ps^2-qr^2}{d_1})y_1 & \frac{qrs}{d_1}y_1+\frac{x_1}{2}
\end{bmatrix}^n$, for some $n \in \bbZ$, where $(x_1,y_1)$ is the fundamental solution of $x^2-\frac{4pqs^4}{d_1^2}y^2=4$.\\
(4) If $d_1 \nmid qs$ and the equation $x^2-\frac{4pqs^4}{d_1^2}y^2=-4$ is solvable over the integers, then, for any $A \in GL(2,\bbZ)$,
$\pi(A)$ is an automorphism of $\sA_{\al}$ if and only if  $A=\begin{bmatrix}
-\frac{qrs}{d_1}ky_1'+\frac{x_1'}{2} & \frac{qs^2}{d_1}ky_1'\\
(\frac{ps^2-qr^2}{d_1})ky_1' & \frac{qrs}{d_1}ky_1'+\frac{x_1'}{2}
\end{bmatrix}^n$, for some $n \in \bbZ$, where $(x_1',y_1')$ is the fundamental solution of $x^2-\frac{4pqs^4}{d_1^2}y^2=-4$.
\end{proposition}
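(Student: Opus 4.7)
The plan is to follow the template of Proposition~\ref{p:sodd} with the necessary adjustments for $s$ even. I would start from $A = \begin{bmatrix} a & b \\ c & d \end{bmatrix} \in GL(2,\bbZ)$ satisfying the conditions of Corollary~\ref{c:auto}, expand $\al^2$, and separate the rational and irrational parts of $b\al^2 + (a-d)\al - c = 0$ to get $d = a + \frac{2r}{s}b$ and $c = \frac{ps^2-qr^2}{qs^2}b$. Because $d \in \bbZ$ forces $s \mid 2rb$ and $\gcd(r,s) = 1$, we obtain $s \mid 2b$; here $s$ even yields only $b = \frac{s}{2}j$ (in contrast to the odd-$s$ case where $b = sj$). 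Imposing $c \in \bbZ$ then gives $2qs \mid (ps^2-qr^2)j$, hence $j = \frac{2qs}{d_1}l$ for some $l \in \bbZ$. Plugging back in produces the parametric form
\[
A = \begin{bmatrix} a & \frac{qs^2}{d_1}l \\ \frac{ps^2-qr^2}{d_1}l & a + \frac{2qrs}{d_1}l \end{bmatrix},
\]
where $\frac{2qrs}{d_1} \in \bbZ$ since $d_1 \mid 2qs$.

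Next I would analyze $\det A = \pm 1$ as a quadratic in $a$, yielding $a = -\frac{qrs}{d_1}l \pm \sqrt{\frac{pqs^4}{d_1^2}l^2 \pm 1}$. The case split in the proposition is driven by whether $d_1 \mid qs$. If $d_1 \mid qs$, then $\frac{qrs}{d_1}l \in \bbZ$, so integrality of $a$ forces $\sqrt{\frac{pqs^4}{d_1^2}l^2 \pm 1}$ to be an integer $x$, recovering the Pell equations $x^2 - \frac{pqs^4}{d_1^2}y^2 = \pm 1$, after which the argument proceeds verbatim as in Proposition~\ref{p:sodd}, producing (1) and (2). If instead $d_1 \nmid qs$, a $2$-adic valuation check shows $v_2(d_1) = 1 + v_2(qs)$, and since $r$ is odd (because $s$ is even and $\gcd(r,s) = 1$), the integer $\frac{2qrs}{d_1}$ is odd. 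Multiplying the determinant relation by $4$ and substituting $X = 2a + \frac{2qrs}{d_1}l$, $Y = l$ converts it into $X^2 - \frac{4pqs^4}{d_1^2}Y^2 = \pm 4$, which is the modified Pell equation appearing in cases (3) and (4); the expression $a = \frac{x}{2} - \frac{qrs}{d_1}y$ from the proposition statement then drops out immediately.

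In each of the four sub-cases, the sign analysis of $a + b\al$ would proceed exactly as in Proposition~\ref{p:sodd}: for $\det A = +1$ the positive square root is always correct, while for $\det A = -1$ the sign of $l$ must match $k$, which is how the factor $k$ enters the matrix in cases (2) and (4). Standard Pell bookkeeping via the fundamental solution $(x_1, y_1)$ (or $(x_1', y_1')$ when $-1$, respectively $-4$, is solvable) together with the associated recurrence identifies every admissible $A$ with an integer power of the displayed generator matrix, by the same induction used in Proposition~\ref{p:sodd}. The hard part will be the $d_1 \nmid qs$ case, where I need to verify that $\frac{4pqs^4}{d_1^2}$ is a positive nonsquare integer---which follows from the $v_2$-identity above together with the nonsquareness of $pq$---and that every integer solution $(x, y)$ of $x^2 - \frac{4pqs^4}{d_1^2}y^2 = \pm 4$ automatically satisfies $x \equiv y \pmod 2$, so that $a = \frac{x}{2} - \frac{qrs}{d_1}y \in \bbZ$. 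The latter follows from a further $v_2$-analysis showing that in this sub-case $q$ must be even with $v_2(q) = 2 v_2(s)$, whence $\frac{4pqs^4}{d_1^2}$ is odd, and then $x^2 - Ny^2 \equiv 0 \pmod 4$ with $N$ odd forces $x$ and $y$ to share parity.
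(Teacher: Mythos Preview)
Your proposal is correct and follows essentially the same route as the paper's proof: both reduce to the parametric form $A=\begin{bmatrix} a & \frac{qs^2}{d_1}l \\ \frac{ps^2-qr^2}{d_1}l & a+\frac{2qrs}{d_1}l\end{bmatrix}$, split on whether $d_1\mid qs$, invoke Proposition~\ref{p:sodd} verbatim in the first sub-case, and in the second sub-case pass to the $\pm 4$ Pell equation and establish the same-parity condition on $(x,y)$ needed for integrality of $a$. The only cosmetic differences are that you package the $2$-adic bookkeeping via $v_2$ (deriving $v_2(d_1)=1+v_2(qs)$ and then $v_2(q)=2v_2(s)$, hence $\frac{4pqs^4}{d_1^2}$ odd), whereas the paper writes $s=2^{m_1}n_1$, $q=2^{m_2}n_2$, $d_1=2^{m_1+m_2+1}n_3$ and argues $m_2=2m_1$ to conclude $\frac{4pqs^4}{d_1^2}\not\equiv 0\pmod 4$; and you reach $X^2-\tfrac{4pqs^4}{d_1^2}Y^2=\pm 4$ via the substitution $X=2a+\tfrac{2qrs}{d_1}l$ rather than the quadratic formula. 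These are equivalent, and your version is arguably a bit cleaner.
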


\begin{proof}
First, note that if $d_1 \,|\,qs$, then $\frac{qrs}{d_1},\frac{qs^2}{d_1},\frac{ps^2-qr^2}{d_1} \in \bbZ$. In this case, we will show that $\frac{pqs^4}{d_1^2}$ is a nonsquare integer. Since $d_1|(ps^2-qr^2)$ and $d_1|qs^2$, we have $d_1^2|(pqs^4-q^2r^2s^2)$. We also have that $d_1^2|q^2r^2s^2$. Thus $d_1^2|pqs^4$. Since $\al$ is irrational, $\sqrt{\frac{p}{q}}$ is irrational. Thus $pq$ is nonsquare. Hence $\frac{pqs^4}{d_1^2}$ must be nonsquare. Now, if $d_1 \nmid qs$, then $\frac{ps^2-qr^2}{d_1},\frac{qs^2}{d_1} \in \bbZ$. In this case, we will show that$\frac{4pqs^4}{d_1^2}$ is a nonsquare integer. Since $d_1|2qs$, $d_1|4qs^2$. Since $d_1|(ps^2-qr^2)$ and $d_1|4qs^2$, we have $d_1^2|(4pqs^4-4q^2r^2s^2)$. We also have that $d_1^2|4q^2r^2s^2$. Thus $d_1^2|4pqs^4$. Since $\al$ is irrational, $\sqrt{\frac{p}{q}}$ is irrational. Thus $pq$ is nonsquare. Hence $\frac{4pqs^4}{d_1^2}$ must be nonsquare.

Now, let $A=\begin{bmatrix}
a & b \\
c & d
\end{bmatrix} \in GL(2,\bbZ)$ be such that $a+b\alpha>0$ and
\begin{align*}
b\alpha^2+(a-d)\alpha-c=0.
\end{align*}
Then $a-d=-2\frac{r}{s}b$ and $c=(\frac{ps^2-qr^2}{qs^2})b$. So $A$ is of the form
\begin{align*}
\begin{bmatrix}
a & b \\
(\frac{ps^2-qr^2}{qs^2})b & a+2\frac{r}{s}b
\end{bmatrix}.
\end{align*}
Since $s$ is even, $\frac{s}{2} \in \bbN$. Since $A \in GL(2,\bbZ)$ and $\gcd(r,\frac{s}{2})=1$, we have $\frac{s}{2}|b$. Then there exists $j \in \bbZ$ such that $b=\frac{s}{2}j$. Now,
\begin{align*}
A=\begin{bmatrix}
a & \frac{s}{2}j \\
(\frac{ps^2-qr^2}{2qs})j & a+rj
\end{bmatrix}.
\end{align*}
Since $\gcd(\frac{ps^2-qr^2}{d_1},\frac{2qs}{d_1})=1$, $\frac{2qs}{d_1}|j$. So $j=\frac{2qs}{d_1}l$ for some $l\in \bbZ$. Thus
\begin{align*}
A=\begin{bmatrix}
a & \frac{qs^2}{d_1}l \\
(\frac{ps^2-qr^2}{d_1})l & a+2\frac{qrs}{d_1}l
\end{bmatrix}.
\end{align*}
Case : $d_1|qs$.\\
By the same argument in the proof of Proposition~\ref{p:sodd},
\begin{align*}
A=\begin{bmatrix}
-\frac{qrs}{d_1}y_1+x_1 & \frac{qs^2}{d_1}y_1 \\
(\frac{ps^2-qr^2}{d_1})y_1 & \frac{qrs}{d_1}y_1+x_1
\end{bmatrix}^n,
\end{align*}
for some $n \in \bbZ$, where $(x_1,y_1)$ is the fundamental solution of $x^2-\frac{pqs^4}{d_1^2}y^2=1$ if the equation $x^2-\frac{pqs^4}{d_1^2}y^2=-1$ is not solvable over the integers and
\begin{align*}
A=\begin{bmatrix}
-\frac{qrs}{d_1}ky_1'+x_1' & \frac{qs^2}{d_1}ky_1' \\
(\frac{ps^2-qr^2}{d_1})ky_1' & \frac{qrs}{d_1}ky_1'+x_1'
\end{bmatrix}^n,
\end{align*}
for some $n \in \bbZ$, where $(x_1',y_1')$ is the fundamental solution of $x^2-\frac{pqs^4}{d_1^2}y^2=-1$ if the equation $x^2-\frac{pqs^4}{d_1^2}y^2=-1$ is solvable over the integers.\\
Case : $d_1 \nmid qs$.\\
Recall that $A=\begin{bmatrix}
a & \frac{qs^2}{d_1}l \\
(\frac{ps^2-qr^2}{d_1})l & a+2\frac{qrs}{d_1}l
\end{bmatrix}$, for some $l \in \bbZ$\\
If $\operatorname{det}(A)=1$, then $a^2+2\frac{qrs}{d_1}la-(\frac{ps^2-qr^2}{d_1})\frac{qs^2}{d_1}l^2=1$. By the quadratic formula,
\begin{align*}
a=-\frac{qrs}{d_1}l\pm\frac{ \sqrt{\frac{4pqs^4}{d_1^2}l^2+4}}{2}.
\end{align*}
Since $a+b\al>0$, $a=-\frac{qrs}{d_1}l+\frac{ \sqrt{\frac{4pqs^4}{d_1^2}l^2+4}}{2}$ . Now $A$ is of the form
\begin{align*}
\begin{bmatrix}
-\frac{qrs}{d_1}l+\frac{ \sqrt{\frac{4pqs^4}{d_1^2}l^2+4}}{2} & \frac{qs^2}{d_1}l \\
(\frac{ps^2-qr^2}{d_1})l & \frac{qrs}{d_1}l+\frac{ \sqrt{\frac{4pqs^4}{d_1^2}l^2+4}}{2}
\end{bmatrix}.
\end{align*}
\indent Since $d_1 \nmid qs$ and $d_1\,|\,2qs$, $d_1$ must be even. Since $\gcd(r,s)=1$ and $s$ is even, $r$ must be odd. Since $d_1=\gcd(ps^2-qr^2,2qs)$ and $d_1$ is even, $q$ must be even. Since $\gcd(p,q)=1$ and $q$ is even, $p$ must be odd. Now we write $s=2^{m_1}n_1,q=2^{m_2}n_2$, where $m_1,m_2 \in \bbN$ and $n_1,n_2$ are odd integers. Since $d_1 \nmid qs$ and $d_1\,|\,2qs$, $d_1=2^{m_1+m_2+1}n_3$ for some odd integer $n_3$ and $n_3\,|\,n_1n_2$. Since $n_1,n_2,n_3$ are odd and $n_3\,|\,n_1n_2$, $\frac{n_1n_2}{n_3}$ is an odd integer. Now we have
\begin{align*}
\frac{qrs}{d_1}=\frac{2^{m_2}n_2r2^{m_1}n_1}{2^{m_1+m_2+1}n_3}=\frac{r(\frac{n_1n_2}{n_3})}{2}.
\end{align*}
Thus $A$ is of the form
\begin{align*}
\begin{bmatrix}
\frac{-r(\frac{n_1n_2}{n_3})l+\sqrt{\frac{4pqs^4}{d_1^2}l^2+4}}{2} & \frac{qs^2}{d_1}l \\
(\frac{ps^2-qr^2}{d_1})l & \frac{r(\frac{n_1n_2}{n_3})l+\sqrt{\frac{4pqs^4}{d_1^2}l^2+4}}{2}
\end{bmatrix}.
\end{align*}
Since $A \in GL(2, \bbZ)$,  $\sqrt{\frac{4pqs^4}{d_1^2}l^2+4}$ must be an integer. Thus
\begin{align*}
A=\begin{bmatrix}
\frac{-r(\frac{n_1n_2}{n_3})l+x}{2} & \frac{qs^2}{d_1}l \\
(\frac{ps^2-qr^2}{d_1})l & \frac{r(\frac{n_1n_2}{n_3})l+x}{2}
\end{bmatrix}.
\end{align*}
where $x,l\in \bbZ, x>0$ and $x^2-\frac{4pqs^4}{d_1^2}l^2=4$.\\
\indent Since $\frac{-r(\frac{n_1n_2}{n_3})l+x}{2}$ and $\frac{r(\frac{n_1n_2}{n_3})l+x}{2}$ must be integers and $r,\frac{n_1n_2}{n_3}$ are odd, $x$ and $l$ must have the same parity. Note that if $n \not\equiv 0\ (\textrm{mod}\ 4)$, then any solution to the equation $x^2-ny^2=4$ or $x^2-ny^2=-4$, $x$ and $y$ have the same parity. Now, we will show that $\frac{4pqs^4}{d_1^2} \not\equiv 0\ (\textrm{mod}\ 4)$, i.e., we have to show that $d_1^2 \nmid pqs^4$. Since $d_1\,|\,ps^2-qr^2$ and $d_1=2^{m_1+m_2+1}n_3$, we have $2^{m_1+m_2+1}\,|\,ps^2-qr^2$. Let $m=\min\{2m_1,m_2\}$. Then
\begin{align*}
ps^2-qr^2=2^m(p\frac{s^2}{2^m}-\frac{q}{2^m}r).
\end{align*}
Since  $m=\min\{2m_1,m_2\}$, $p\frac{s^2}{2^m}$ or $\frac{q}{2^m}r$ must be odd. Since $m \leq m_2 <m_1+m_2+1$, $p\frac{s^2}{2^m}-\frac{q}{2^m}r$ must be even. That is $p\frac{s^2}{2^m}$ and $\frac{q}{2^m}r$ must be odd. Thus $m=m_2=2m_1$. Now we have
\begin{align*}
pqs^4=p2^{m_2}n_22^{4m_1}n_1^4=2^{m_2+4m_1}pn_2n_1^4
\end{align*}
and
\begin{align*}
d_1^2=2^{2m_1+2m_2+2}n_3^2.
\end{align*}
Since $m_2+4m_1=2m_2+2m_1+1<2m_1+2m_2+2$, $d_1^2 \nmid pqs^4$. Thus $\frac{4pqs^4}{d_1^2} \not\equiv 0\ (\textrm{mod}\ 4)$. $x$ and $l$ that satisfy the equation $x^2-\frac{4pqs^4}{d_1^2}l^2=4$ have the same parity. Thus, in this case, we have to find integers $x ,l$ such that $x>0$ and $x^2-\frac{4pqs^4}{d_1^2}l^2=4$.\\
If $\operatorname{det}(A)=-1$, by the same argument above, if $k=1$, then
\begin{align*}
A=\begin{bmatrix}
-\frac{qrs}{d_1}l+\frac{x}{2} & \frac{qs^2}{d_1}l\\
(\frac{ps^2-qr^2}{d_1})l & \frac{qrs}{d_1}l+\frac{x}{2}
\end{bmatrix},
\end{align*}
where $x,l\in \bbZ, l>0$ and $x^2-\frac{4pqs^4}{d_1^2}l^2=-4$.\\
If $k=-1$, then
\begin{align*}
A=\begin{bmatrix}
-\frac{qrs}{d_1}l+\frac{x}{2} & \frac{qs^2}{d_1}l\\
(\frac{ps^2-qr^2}{d_1})l & \frac{qrs}{d_1}l+\frac{x}{2}
\end{bmatrix},
\end{align*}
where $x,l\in \bbZ, l<0$ and $x^2-\frac{4pqs^4}{d_1^2}l^2=-4$.\\
In this case, we have to find integers $x ,l$ such that $kl>0$ and $x^2-\frac{4pqs^4}{d_1^2}l^2=-4$.\\
 By the same argument in the proof of Proposition~\ref{p:sodd}, we have
\begin{align*}
A=\begin{bmatrix}
-\frac{qrs}{d_1}y_1+\frac{x_1}{2} & \frac{qs^2}{d_1}y_1\\
(\frac{ps^2-qr^2}{d_1})y_1 & \frac{qrs}{d_1}y_1+\frac{x_1}{2}
\end{bmatrix}^n,
\end{align*}
for some $n \in \bbZ$, where $(x_1,y_1)$ is the fundamental solution of $x^2-\frac{4pqs^4}{d_1^2}l^2=4$ if the equation $x^2-\frac{4pqs^4}{d_1^2}y^2=-4$ is not solvable over the integers and
\begin{align*}
A=\begin{bmatrix}
-\frac{qrs}{d_1}ky_1'+\frac{x_1'}{2} & \frac{qs^2}{d_1}ky_1'\\
(\frac{ps^2-qr^2}{d_1})ky_1' & \frac{qrs}{d_1}ky_1'+\frac{x_1'}{2}
\end{bmatrix}^n,
\end{align*}
for some $n \in \bbZ$, where $(x_1',y_1')$ is the fundamental solution of $x^2-\frac{4pqs^4}{d_1^2}y^2=-4$ if the equation $x^2-\frac{4pqs^4}{d_1^2}y^2=-4$ is solvable over the integers.
\end{proof}

\begin{example}
$\alpha=\sqrt{5}$.\\
\indent The fundamental solution of the equation $x^2-5y^2=-1$ is $(x_1',y_1')=(2,1)$. Thus any matrix $A$ in $GL(2,\bbZ)$ that $\pi(A)$ is automorphisms of $\sA_{\sqrt{5}}$ is of the form $A=\begin{bmatrix}
2 & 1 \\
5 & 2
\end{bmatrix}^n$, $n \in \bbZ$, i.e., $\pi(A)$ is an automorphism of $A_{\sqrt{5}}$ if and only if $A=\begin{bmatrix}
2 & 1 \\
5 & 2
\end{bmatrix}^n$ for some $n \in \bbZ$.

\end{example}

\begin{example}
$\alpha=\sqrt{7}$.\\
\indent Since $7 \not\equiv 1 (\mod 4)$, the equation $x^2-7y^2=-1$ is not solvable over integers. So we look at the fundamental solution of the equation  $x^2-7y^2=1$. Since $(x_1,y_1)=(8,3)$ is the fundamental solution of $x^2-7y^2=1$,  any matrix $A$ in $GL(2,\bbZ)$ that $\pi(A)$ is automorphisms of $\sA_{\sqrt{7}}$ is of the form $A=\begin{bmatrix}
8 & 3 \\
21 & 8
\end{bmatrix}^n$, $n \in \bbZ$.
\end{example}

\begin{example}
$\alpha=\frac{1+\sqrt{7}}{3}$.\\
\indent Since $(x_1,y_1)=(8,3)$ is the fundamental solution of $x^2-7y^2=1$, by Proposition~\ref{p:sodd}, any matrix $A$ in $GL(2,\bbZ)$ that $\pi(A)$ is automorphisms of $\sA_{\frac{1+\sqrt{7}}{3}}$ is of the form $A=\begin{bmatrix}
5 & 9 \\
6 & 11
\end{bmatrix}^n$, $n \in \bbZ$.
\end{example}

\begin{example}
$\alpha=\frac{1+\sqrt{5}}{2}$.\\
\indent The fundamental solution of the equation $x^2-5y^2=-4$ is $(x_1',y_1')=(1,1)$. By Proposition~\ref{p:seven}, any matrix $A$ in $GL(2,\bbZ)$ that $\pi(A)$ is automorphisms of $\sA_{\frac{1+\sqrt{5}}{2}}$ is of the form $A=\begin{bmatrix}
0 & 1 \\
1 & 1
\end{bmatrix}^n$, $n \in \bbZ$
\end{example}

Note that $\begin{bmatrix}
2 & 1 \\
5 & 2
\end{bmatrix}$ is not conjugate to $\begin{bmatrix}
0 & 1 \\
1 & 1
\end{bmatrix}$ nor $\begin{bmatrix}
-1 & 1 \\
1 & 0
\end{bmatrix}=\begin{bmatrix}
0 & 1 \\
1 & 1
\end{bmatrix}^{-1}$ since they don't have the same trace. So $\sA_{\sqrt{5}}$ is not isomorphic to $\sA_{\frac{1+\sqrt{5}}{2}}$.
\bibliographystyle{plain}

\end{document}